\newcommand{\df}{\ensuremath{\partial}}
\newcommand{\ms}{\ensuremath{\mathcal{M}}}
\def\TM+{T^*(\rr_+ \times M)}
\def\rp{\mathbb R_+}
\newcommand{\cc}{\ensuremath{\mathbb{C}}}
\newcommand{\rr}{\ensuremath{\mathbb{R}}}
\newcommand{\ff}{\ensuremath{\mathbb{F}}}
\theoremstyle{plain}
\newtheorem{thm}{Theorem}[section]
\newtheorem{cor}[thm]{Corollary}
\newtheorem{lem}[thm]{Lemma}
\newtheorem{prop}[thm]{Proposition}
\theoremstyle{definition}
\newtheorem{defn}[thm]{Definition}
\theoremstyle{remark}
\newtheorem{rem}[thm]{Remark}
\newtheorem{ex}[thm]{Example}
\numberwithin{equation}{section}  
\newcommand{\dfn}[1]{{\textbf {#1}}}
\newcommand{\leg}{\ensuremath{\Lambda}}
\newcommand{\lag}[2]{\ensuremath{{L}_{#1}^{#2}}}
\newcommand{\alg}{\ensuremath{\mathcal{A}}}
\newcommand{\aug}{\ensuremath{\varepsilon}}
\DeclareMathOperator{\im}{Im}
\DeclareMathOperator{\Aug}{Aug}
\newcommand{\cont}[1]{\ensuremath{\mathcal{C}(#1)}}
\def\mc{\underline{c}}
\def\Mc{\overline{c}}
\def\PR{\mathcal C(P)}
\def\Rho{P}
\def\tL{\widetilde L}
\def\tleg{\widetilde \leg}
\def\taug{\widetilde \aug}
\def\tlambda{\overline \lambda}
\def\trho{\widetilde \rho}
\def\tb{\widetilde{\mathbf{b}}}
\def\tC{\widetilde{C}}
\def\tP{\widetilde{P}}
\def\cobord{fundamental cobordism}
\newcommand{\mixed}[1]{\ensuremath{\overset{{}_\leftarrow}{#1}}}
\begin{document}

\title[Relative Gromov Width of Lagrangian Cobordisms]
{The Relative Gromov Width of Lagrangian Cobordisms {between Legendrians}} 

\author[J. Sabloff]{Joshua M. Sabloff} \address{Haverford College,
Haverford, PA 19041} \email{jsabloff@haverford.edu} \thanks{JS is
partially supported by NSF grant DMS-1406093, and the Schmidt  Fellowship at IAS.  Both JS and LT 
received support at IAS from The Fund for Mathematics.}

\author[L. Traynor]{Lisa Traynor} \address{Bryn Mawr College, Bryn
Mawr, PA 19010} \email{ltraynor@brynmawr.edu} 

\begin{abstract}
We obtain upper and lower bounds for the relative Gromov width 
of 
  Lagrangian cobordisms between Legendrian submanifolds.
  Upper bounds arise from the existence of $J$-holomorphic disks   with boundary on the Lagrangian cobordism that
  pass through the center of a given symplectically embedded ball.  The areas of these disks ---
	  and hence the sizes of these balls --- are
	controlled by a real-valued fundamental capacity, a quantity derived from the algebraic structure of filtered  linearized Legendrian Contact Homology of the
	 Legendrian at the top of the cobordism.  Lower bounds come from explicit constructions that use neighborhoods of 
	Reeb chords in the Legendrian ends.  We also study relationships between the relative Gromov width and another quantitative measurement,
	the  length of a cobordism between two Legendrian submanifolds.
\end{abstract}

\date{\today}

\maketitle

\section{Introduction}
\label{sec:intro}

\subsection{Quantitative Study of Lagrangian Cobordisms}
\label{ssec:context}

    In \cite{josh-lisa:cob-length}, the authors inaugurated the study of quantitative questions about Lagrangian cobordisms between Legendrian submanifolds by defining and investigating the \emph{lengths} of such cobordisms.  We continue the investigation of quantitative features of Lagrangian cobordisms through the study of relative Gromov \emph{widths}, a relative version of a classical symplectic measurement.  The key results in this paper encompass upper bounds
    on relative Gromov widths
     derived from Floer-theoretic {techniques},
    lower bounds  arising from constructions,   
 and relationships between the quantitative measures of length and width.

The background for these questions and results begins with Gromov's seminal non-squeezing theorem.  Gromov's proof in \cite{gromov:hol} relies on the non-triviality of the \dfn{Gromov 
width} of a symplectic manifold $(X,\omega)$.  The width of a symplectic manifold $(X, \omega)$
 is the supremum of the quantities $\pi r^2$ taken over all symplectic embeddings of closed balls of radius $r$ into $(X, \omega)$.
In the presence of a Lagrangian submanifold $L \subset X$, Barraud and Cornea \cite{bc:lagr-intersection} defined the \dfn{relative Gromov width} to be the supremum of the quantities $
\pi r^2$ taken over all \dfn{symplectic embeddings (of a ball) relative to $L$}, i.e., symplectic embeddings  $\psi: (B^{2n}(r), \omega_0) \hookrightarrow (X,\omega)$ with the property that $\psi^{-1}(L) = B^{2n}(r) \cap \rr^n$, where $\rr^n$ denotes a 
Lagrangian plane in $\rr^{2n}$. The notation 
  $\psi: B^{2n}(r) \hookrightarrow (X,L)$ will be used to denote a  symplectic embedding relative to $L$.
  
  Though interesting in its own right, the relative Gromov has also played a role in detecting other symplectic phenomena.   In  recent work of Cornea and Shelukhin \cite{cs:cob-metric}, for example, the
relative Gromov width of a pair of Lagrangians in $(M, \omega)$ is used to show that the ``shadow'' measure of Lagrangian cobordisms between
Lagrangians  in $(\cc \times M, \omega_0 \oplus \omega)$ defines a metric and pseudo-metric on  appropriate spaces of Lagrangians in $M$.

Known upper bounds on the relative Gromov width come from finding a $J$-holomorphic curve of bounded area passing through the center of a
given embedded ball.  
For closed Lagrangians, such disks are known to exist when the Lagrangian is monotone \cite{bc:lagr-intersection,bc:uniruling,charette:chv}, is an orientable surface \cite{charette:width}, 
or admits a metric of non-positive sectional curvature \cite{bml}; see also \cite{albers:extrinsic, damian:universal}. Finding appropriate $J$-holomorphic curves is also the key technique in the proof that the shadow of a Lagrangian cobordism between Lagrangians is an upper bound for the relative Gromov width \cite{cs:cob-metric}.  As shown in \cite{rizell:uniruled, murphy:closed-exact}, however, such 
disks do not always exist for closed Lagrangians in symplectizations, and the relative Gromov width may be infinite. Constructions leading to sharp lower bounds on the relative Gromov 
width are more rare: see, for example, \cite{buhovsky:max-pack}.

The goal of this paper is to extend the calculation of the relative Gromov width to \dfn{exact Lagrangian cobordisms between Legendrian submanifolds}:
we always consider \emph{closed}
Legendrians   in the contactization of a Liouville manifold,  $(\cont{P}, \ker \alpha)$, and cobordisms
are always \emph{properly embedded, orientable, Maslov zero,  exact} Lagrangian submanifolds
of the symplectization $\left(\rr \times \cont{P}, d(e^s \alpha)\right)$ that coincide with cylinders over Legendrians $\leg_\pm$ in the complement of $[s_-,s_+] \times \cont{P}$.  Formal definitions
can be found in Section~\ref{sec:background}.  
 We denote by $L_a^b$ the portion of a Lagrangian cobordism $L$ whose symplectization coordinate lies between $a$ and $b$:
$$L_a^b  = L \cap \left((a,b) \times \cont{P} \right).$$
The \dfn{relative (Gromov) width} of $L_a^b$ is then defined in terms of finding relative symplectic embeddings:
\[
w\left(L_a^b\right) = \sup \left\{ \pi r^2 \;\mid\; \exists\, \psi: B^{2n}(r) \hookrightarrow \left((a,b) \times \cont{P}, L_a^b\right) \right\}.
\]
 Since the relative Gromov width of the ``top half'' $L_a^\infty$ of any Lagrangian cobordism $L$ is infinite (see Lemma~\ref{lem:infinite-top}), we focus attention to the relative Gromov width of the ``bottom half'' $L_{-\infty}^0$, where $L$ is cylindrical outside of $[s,0]$, for $s < 0$.

\subsection{Upper Bounds}
\label{ssec:upper-intro}

Our derivation of an upper bound on the relative Gromov width of a Lagrangian cobordism $L$ from $\leg_-$ to $\leg_+$ will follow the now-standard approach of finding a $J$-holomorphic curve of controlled area through the center of a given relative symplectic embedding of a ball. To guarantee the existence of an appropriate $J$-holomorphic curve, we will assume that $\leg_+$ is connected,  $\leg_-$ and $\leg_+$ are horizontally displaceable, and $\leg_{-}$ admits an augmentation $\aug_-$; we term such a cobordism a \dfn{\cobord} and define it officially in Definition~\ref{defn:fun-cobord}. Note that a ``horizontally displaceable'' Legendrian $\leg \subset \cont{P}$ is one whose Lagrangian projection of $\leg$ to $P$ is displaceable by a Hamiltonian isotopy \cite{high-d-duality}; in particular, any $\leg \subset \rr^{2n+1} = J^1\rr^n$ is horizontally displaceable.  Under these assumptions, we may relate the generator of $H_0(L)$ with the fundamental class of $\leg_+$ using the Generalized Duality Long Exact Sequence of \cite[{Theorem 1.2}]{c-dr-g-g-cobordism}; see Theorem~\ref{thm:duality-general}. 
 An examination of this relationship at the chain level leads to the desired $J$-holomorphic curve; the area of the curve will be governed by 
  the  
\dfn{fundamental capacity} $c(\leg_{+}, \aug_{+})$ of    $\leg_{+}$, where the augmentation $\aug_+$ of $\leg_+$ is induced from $\aug_-$ via $L$.

The upper  bound to the width will be given in terms of the minimal and maximal fundamental capacities of the  Legendrian $\leg_+$ with respect
to sets of augmentations.  Let $\Aug(\leg_{+})$ be the set of all augmentations of the 
Legendrian contact homology differential graded algebra of $\leg_{+}$ (see Section~\ref{ssec:lch}), and given a Lagrangian cobordism $L$ from $\leg_-$ to $\leg_{+}$, 
let $\Aug_L(\leg_{+}) \subseteq \Aug(\leg_{+})$ be the set of augmentations of $\leg_{+}$ induced from augmentations of the Legendrian $\leg_-$;
see Remark~\ref{rem:induced-aug}.  We now define the the {\bf miniumum $L$-induced fundamental capacity} and the 
{\bf maximum fundamental capacity}, respectively, as:
\begin{align*}
	\mc_L(\leg_{+}) &= \min\{ c(\leg_{+},\aug_{+})\;:\; \aug_{+} \in \Aug_L(\leg_{+}) \}, \\
	\Mc(\leg_{+}) &= \max\{ c(\leg_{+},\aug_{+})\;:\; \aug_{+} \in \Aug(\leg_{+}) \}.
\end{align*}
Observe that for all $L$, we have
$$\mc_L(\leg_{+}) \leq \Mc(\leg_{+}).$$ 
In particular, if  $U^n(r)$ denotes the standard $n$-dimensional
Legendrian unknot in $\rr^{2n+1}$ with a single Reeb chord of height $r$, then
$\mc_{L}(U^{n}(r)) = \Mc(U^{n}(r)) = r$. To the authors' knowledge, there are no known examples where the fundamental capacity depends on the augmentation, though such examples are theoretically possible.

We are now ready to state our main theorem for an upper bound on the relative Gromov width:
  
\begin{thm} \label{thm:ub}  If $L \subset \rr \times \PR$ is a \cobord,   then 
\begin{equation}
w\left( L_{-\infty}^0\right)  \leq 2\mc_{L}(\leg_{+}) \leq 2\Mc(\leg_+).
\end{equation}
\end{thm}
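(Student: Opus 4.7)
The second inequality $\underline{c}_L(\leg_+) \leq \overline{c}(\leg_+)$ is immediate from the definitions: since $\Aug_L(\leg_+) \subseteq \Aug(\leg_+)$ is nonempty (it contains the pushforward of $\aug_-$), the minimum of $c(\leg_+, \cdot)$ over $\Aug_L(\leg_+)$ is bounded above by the value at any single element and hence by the maximum over $\Aug(\leg_+)$. The content of the theorem therefore lies in the first inequality.

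To establish $w(L_{-\infty}^0) \leq 2\underline{c}_L(\leg_+)$, I fix a relative symplectic embedding $\psi \colon B^{2n}(r) \hookrightarrow \bigl((-\infty,0)\times\cont{P},\, L_{-\infty}^0\bigr)$ and aim to show $\pi r^2 \leq 2\underline{c}_L(\leg_+)$. Choose $\aug_+ \in \Aug_L(\leg_+)$ realizing the minimum $\underline{c}_L(\leg_+)$; by the hypotheses in the definition of a \cobord, such an augmentation exists. The plan is to follow the now-standard ``Gromov trick'': select a compatible almost complex structure $J$ on $\rr\times\cont{P}$ that is cylindrical near the positive end (so Floer-theoretic compactness applies) and that, under the embedding $\psi$, agrees with the standard integrable structure on the ball $B^{2n}(r)$. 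I will then produce a proper, non-constant $J$-holomorphic curve $u$ with boundary on $L$ whose image meets the center $\psi(0)$, and conclude via monotonicity.

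The heart of the argument is the production of such a curve $u$, together with an area bound. For this I invoke the Generalized Duality Long Exact Sequence of Chantraine--Dimitroglou Rizell--Ghiggini--Golovko (Theorem~\ref{thm:duality-general}), which identifies the generator of $H_0(L)$ with the fundamental class of $\leg_+$ under the induced augmentation $\aug_+$. Unwinding this identification at the chain level --- concretely, by counting rigid $J$-holomorphic disks with boundary on $L$ that carry a marked point through $\psi(0)$ and have a single positive puncture asymptotic to a Reeb chord of $\leg_+$ --- produces a nontrivial count of such disks. The fundamental capacity $c(\leg_+,\aug_+)$ is precisely engineered to bound the action (and hence the $d(e^s\alpha)$-area) of at least one disk in this count: some such disk $u$ has area at most $c(\leg_+,\aug_+) = \underline{c}_L(\leg_+)$. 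This step is the main obstacle, and the work will be in carefully setting up the moduli spaces of marked disks so that (i) the chain-level representative of the duality map survives to give a nonzero count passing through an arbitrary interior point, and (ii) the filtration by action of $\leg_+$ controls the area of the output disk.

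To conclude, I restrict the disk $u$ to the component $\Sigma_0$ of $u^{-1}(\psi(B^{2n}(r)))$ containing the preimage of $\psi(0)$. Pulling back via $\psi^{-1}$ yields a proper $J_0$-holomorphic curve in $B^{2n}(r)$ passing through the origin, with boundary lying either on $\partial B^{2n}(r)$ or on the Lagrangian plane $\rr^n\cap B^{2n}(r)$. The standard monotonicity inequality for $J$-holomorphic curves relative to a Lagrangian plane then gives $\area(\Sigma_0) \geq \tfrac{1}{2}\pi r^2$ (the factor of $\tfrac{1}{2}$ reflects the doubling trick across $\rr^n$). Combining with the upper bound $\area(u) \leq \underline{c}_L(\leg_+)$ from the previous step yields $\tfrac{1}{2}\pi r^2 \leq \underline{c}_L(\leg_+)$, which is the required inequality. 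Taking the supremum over all admissible embeddings $\psi$ completes the proof.
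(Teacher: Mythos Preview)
Your overall strategy---find a holomorphic disk of controlled area through the center of the embedded ball and invoke monotonicity---is the right one, and matches the paper's.  However, the technical realization you sketch diverges from the paper's in a way that leaves a real gap.

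The paper does \emph{not} produce a disk with boundary on the single Lagrangian $L$ carrying a boundary marked point through $\psi(0)$.  Instead, following the Cthulhu-complex framework of Chantraine--Dimitroglou Rizell--Ghiggini--Golovko, it builds a small Hamiltonian perturbation $\tL$ of $L$ (depending on the point $\psi(0)$, which becomes a transverse intersection $m\in L\cap\tL$), and the chain-level map $\Sigma$ underlying the duality sequence counts disks in $\ms^{\mathbf{J}}_{L\leftarrow\tL}(\mixed{a};\mathbf{b},m,\tb)$.  These disks have boundary on $L\cup\tL$, a corner at the intersection point $m$, a positive puncture at a \emph{mixed} Reeb chord $\mixed{a}$ between $\leg_+$ and its pushoff, and in general further negative punctures at Reeb chords $\mathbf{b},\tb$ of $\leg_-,\tleg_-$ to which the augmentations $\aug_-,\taug_-$ are applied.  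Your description (``a single positive puncture asymptotic to a Reeb chord of $\leg_+$,'' boundary only on $L$) omits both the second Lagrangian and the negative punctures; the cited Generalized Duality sequence does not, as written, supply curves of the shape you describe.  If you want to avoid the pair $(L,\tL)$ you would need a genuinely different construction of the chain-level duality map, which you have not provided.

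Correspondingly, the monotonicity step is different.  Your doubling argument across a single plane $\rr^n$ would be fine if the boundary really lay only on $\rr^n$.  In the paper's setup, the pullback of the disk boundary lies on the \emph{union} of two transverse Lagrangian planes $\rr^n\cup\tP$ through the origin, with a corner at $0$.  The paper proves a tailored monotonicity theorem for this situation and then uses the Robbin--Salamon asymptotic formula to compute the density $Z=\lim_{t\to0}\Theta(t)=(k\pi-\delta)/2\pi\geq (\pi-\delta)/2\pi$, which is where the factor of $2$ comes from.  This is more delicate than a Schwarz reflection and is the genuine replacement for your doubling trick once one is forced into the two-Lagrangian picture.
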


 \subsection{Lower Bounds}  
\label{ssec:lower-intro} To complement the upper bounds on the relative Gromov width in Theorem~\ref{thm:ub},  we derive  lower bounds  through 
the construction of relative symplectic embeddings.  These embeddings are constructed in  a neighborhood of
a Reeb chord  of a Legendrian at an end of the Lagrangian cobordism.  These Reeb chords need to be sufficiently ``extendable'':  a Reeb chord  $\gamma$ of a Legendrian $\leg$ is \dfn{frontwise doubly extendable} if the front projection of the upward (or downward) extension of the Reeb chord to twice its height only intersects the front projection of the Legendrian at $\partial \gamma$;  see Definition~\ref{defn:extendable}.

\begin{thm} \label{thm:lower-isolated-chord}
If $\leg \subset J^1M$ has a frontwise doubly extendable Reeb chord of height $h$, then
\[2h \leq w\left( (\rr \times \leg )_{-\infty}^0\right).\]
\end{thm}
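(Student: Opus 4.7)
The plan is, given $\epsilon > 0$, to construct an explicit symplectic embedding $\psi \colon B^{2n+2}(r) \hookrightarrow \bigl((-\infty,0) \times J^1M,\ \mathbb{R} \times \leg\bigr)$ with $\pi r^2 > 2h - \epsilon$. The factor of $2$ in the bound $2h$ reflects the idea that the frontwise double extendability allows us to work in a ``vertical strip'' of effective height $2h$ (namely from just above $-h$ to just below $h$, which together with $z=0$ hitting one Legendrian sheet avoids the other sheet at $z=h$), and the area of the resulting planar region will govern the width of the ball we can fit.

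First, I set up a convenient local model. Using the Weinstein--Darboux neighborhood theorem for Legendrians, I identify a neighborhood of $\gamma$ in $J^1M$ with a neighborhood of a vertical segment in $J^1\mathbb{R}^n$ in which $\leg$ consists of the two parallel sheets $\{z=0,\ p=0\}$ and $\{z=h,\ p=0\}$. The double extendability hypothesis ensures that this local chart can be chosen so that no other piece of $\leg$ intrudes into the region $z \in (-h+\delta,\ 2h-\delta)$ for some small $\delta > 0$. Passing to the symplectization $(\mathbb{R}_s \times J^1M,\ d(e^s\alpha))$ and changing coordinates via $u = e^s,\ P_i = u\,p_i$, the symplectic form becomes the standard Darboux form $du\wedge dz + \sum_i dq_i\wedge dP_i$, the bottom half corresponds to $u \in (0,1)$, and $\mathbb{R}\times\leg$ is locally $\{z=0,\ P=0\} \cup \{z=h,\ P=0\}$.

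I then build the ball embedding in a product fashion. In the $(u,z)$-plane, the open rectangle $(0,1)\times(-h+\delta,\ h-\delta)$ has area $2h - 2\delta$. Using a relative Moser-type argument, I construct a symplectic embedding $\phi_0\colon D^2(r) \hookrightarrow (0,1)\times(-h+\delta,\ h-\delta)$ with $\pi r^2$ arbitrarily close to $2h$, sending the real diameter of $D^2(r)$ into the line $\{z=0\}$ and having image disjoint from $\{z=h\}$; the key is that a tubular neighborhood of the horizontal segment $(0,1)\times\{0\}$ can be symplectically identified with a disk of matching area. Using the standard inclusion $B^{2n+2}(r) \subset D^2(r)\times B^{2n}(r)$, I then set $\psi = (\phi_0\times\phi_1)\big|_{B^{2n+2}(r)}$, where $\phi_1\colon B^{2n}(r) \hookrightarrow $ $(q,P)$-neighborhood is a standard embedding carrying the real part into $\{P=0\}$. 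By construction, $\psi^{-1}(\mathbb{R}\times\leg)$ is precisely $B^{2n+2}(r)\cap\mathbb{R}^{n+1}$, and the image lies in $\{u<1\}$.

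I expect the main technical obstacle to be the planar step: simultaneously achieving (i) area approaching $2h$, (ii) the real diameter mapping into $\{z=0\}$, and (iii) avoidance of $\{z=h\}$ requires a careful, explicit construction rather than an abstract Moser argument, likely by first straightening out a long thin rectangle around $(0,1)\times\{0\}$ and then smoothly rounding corners while controlling the symplectic area. A secondary concern is ensuring that the $(q,P)$-neighborhood in the local chart is large enough to contain $B^{2n}(r)$; if it is not, one refines the Weinstein neighborhood by working in a finer Darboux chart near the chord, exploiting the fact that the two flat sheets of the local model extend uniformly in the $(q,p)$ direction as long as the Legendrian itself remains close to its tangent planes at the chord endpoints.
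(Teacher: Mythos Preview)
Your strategy is sound and closely matches the paper's: pass to cotangent-type coordinates on the symplectization via $u=e^s$, fit a polydisk whose $(u,z)$-factor lands in a rectangle of area approaching $2h$, and arrange the real part to lie on the lower sheet while avoiding the upper one. The planar step you flag as the main obstacle is in fact routine; the paper handles it in one line by embedding $B^2(r)$ relative to a segment into a rectangle of the same area.

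The genuine gap is in your first step. You cannot in general straighten \emph{both} sheets of $\leg$ to $\{z=0,\,p=0\}$ and $\{z=h,\,p=0\}$ by a contactomorphism preserving the form $dz-\lambda$, and a strict contactomorphism is what you need for your coordinate change to produce the standard symplectic form with the claimed heights. Any such map descends to a symplectomorphism of $T^*M$ and would send both Lagrangian projections (the graphs of $df_-$ and $df_+$) into the zero section; injectivity then forces $df_-=df_+$, i.e.\ $f_+-f_-$ locally constant, which fails at a nondegenerate Reeb chord. A non-strict contactomorphism would rescale Reeb chord lengths and spoil the capacity computation.

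The paper sidesteps this by never flattening. It keeps $\leg\cap U = j^1(f_-)\cup j^1(f_+)$, uses the global symplectomorphism $\rr\times J^1M \cong T^*\rr_+\times T^*M$, embeds the polydisk relative to the \emph{zero section}, and then post-composes with the symplectic shear
\[
(t,u,x,y)\ \longmapsto\ (t,\ u+f_-(x),\ x,\ y+t\,df_-(x))
\]
to carry the zero section onto the image of $j^1(f_-)$. Avoidance of $j^1(f_+)$ becomes the elementary inequality $b+f_-(x)<f_+(x)$ for $|b|<h-\varepsilon_1$, using only $f_+-f_->h-\varepsilon_1$ on a sufficiently small base neighborhood. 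This also dissolves your secondary concern: because one works in $T^*C$ for a small ball $C\subset M$ with full (unbounded) cotangent fibers, any $B^{2n}(r)$ embeds relative to the zero section via the cotangent lift of a diffeomorphism $B^n\to C$. Your suggested fix of refining the Darboux chart goes in the wrong direction --- it shrinks the available room rather than enlarging it.
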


Additional lower bounds  for the width of Lagrangian cobordisms 
come from comparisons to the
widths of the cylindrical ends.

\begin{thm}  \label{thm:neg-end} Suppose $L \subset \rr \times \PR$ is a Lagrangian cobordism from $\leg_-$ to $\leg_+$ that is
cylindrical outside $[s_-,0]$.  Then
\begin{equation} \label{eqn:neg-end}
e^{s_-} w\left(\left(\rr \times \leg_-\right)_{-\infty}^0\right) \leq w(L_{-\infty}^0).
\end{equation}
If $L$ is cylindrical outside $[s_-, -\epsilon]$, for some $\epsilon > 0$, then 
\begin{equation} \label{eqn:collared}
w\left((\rr \times \leg_+)_{-\infty}^0 \right)\leq w(L_{-\infty}^0).
\end{equation}
\end{thm}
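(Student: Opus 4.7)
Both inequalities follow from a single scaling-and-translation construction using the Liouville flow on the symplectization. The key observation is that the flow $T_c(s,p) = (s+c, p)$ satisfies $T_c^* \omega = e^c \omega$, so that given any relative symplectic embedding $\psi \colon B^{2n}(r) \hookrightarrow ((a, b) \times \cont{P}, \rr \times \leg)$, I can obtain a new relative symplectic embedding of $B^{2n}(e^{c/2} r)$ into $((a+c, b+c) \times \cont{P}, \rr \times \leg)$ by forming $T_c \circ \psi \circ \sigma$, where $\sigma(x) = e^{-c/2} x$ is the linear dilation of $\rr^{2n}$ chosen so that $\sigma^* \omega_0 = e^{-c} \omega_0$ cancels the conformal factor. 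The Lagrangian cylinder $\rr \times \leg$ is preserved by $T_c$, and the flat Lagrangian slice $\rr^n$ of the ball is preserved by $\sigma$, so the relative condition is maintained. The net effect is to translate the image by $c$ in the $s$-direction and rescale the ball's size by $e^c$.

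For~(\ref{eqn:neg-end}), I apply this construction with $c = s_-$ to an embedding $\psi \colon B^{2n}(r) \hookrightarrow ((-\infty, 0) \times \cont{P}, \rr \times \leg_-)$ whose size approximates $w((\rr \times \leg_-)_{-\infty}^0)$. Since $s_- < 0$, the translated image lies in $(-\infty, s_-) \times \cont{P}$, the lower cylindrical region where $L$ coincides with $\rr \times \leg_-$, so the composition is automatically a relative embedding into $L_{-\infty}^0$ of size $e^{s_-}\pi r^2$. Passing to the supremum gives~(\ref{eqn:neg-end}).

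For~(\ref{eqn:collared}), the strategy is symmetric but exploits the upper cylindrical slab $(-\epsilon, 0) \times \cont{P}$, where $L = \rr \times \leg_+$. Starting from $\psi \colon B^{2n}(r) \hookrightarrow ((-\infty, 0) \times \cont{P}, \rr \times \leg_+)$ with compact image in $[a, b] \times \cont{P}$, $b < 0$, I would apply the construction with $c > 0$ chosen to land the translated image in $(-\epsilon, 0) \times \cont{P}$; this requires $c \in (-\epsilon - a, -b)$, and when a valid $c$ exists the rescaled ball has size $e^c \pi r^2 \geq \pi r^2$ and defines a relative embedding into $L_{-\infty}^0$. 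The main obstacle is that the admissible interval for $c$ is empty precisely when the $s$-thickness $b - a$ of the image exceeds $\epsilon$, so a generic $\psi$ cannot be translated in directly. To circumvent this I would first apply the scaling-and-translation trick within the trivial cylinder $\rr \times \leg_+$ itself, producing a sequence of embeddings whose images are progressively concentrated in an arbitrarily thin $s$-strip abutting $s = 0$ while approximating the supremum $w((\rr \times \leg_+)_{-\infty}^0)$. Once this thin-strip approximation is in hand, the translation step fits each member of the sequence into $(-\epsilon, 0) \times \cont{P}$ and passing to the supremum yields~(\ref{eqn:collared}). Verifying that the thin-strip approximation loses no capacity -- i.e., that the supremum can be realized in the limit by embeddings whose $s$-extent shrinks to zero -- is the technical step I expect to require the most care.
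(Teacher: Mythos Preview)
Your argument for~(\ref{eqn:neg-end}) is correct and matches the paper's: the translation-and-dilation map is exactly the content of Lemma~\ref{lem:heights}, and composing with the inclusion $(\rr \times \leg_-)_{-\infty}^{s_-} \subset L_{-\infty}^0$ finishes the job.

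For~(\ref{eqn:collared}) there is a genuine gap. The map $T_c \circ \psi \circ \sigma$ rigidly translates the \emph{image} of $\psi$ by $c$ in the $s$-coordinate; it does not shrink the $s$-thickness $b-a$ of that image at all. Hence your ``thin-strip approximation'' cannot be produced by this tool: an embedding whose image has $s$-extent exceeding $\epsilon$ still has $s$-extent exceeding $\epsilon$ after any application of your construction, for every $c$. Restricting first to a sub-ball $B^{2n}(r')$ does shrink the image, but it also shrinks the capacity, and the two effects do not balance in your favor: the upward translation you can afford remains bounded as $r'\to 0$ (the image must stay below $s=0$), so the resulting capacity $e^c\pi(r')^2$ tends to $0$ rather than to $w\bigl((\rr\times\leg_+)_{-\infty}^0\bigr)$.

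The paper sidesteps this by moving the target instead of the ball. In Theorem~\ref{thm:collar}, given any $\psi_0: B^{2n}(r) \hookrightarrow \bigl((-\infty,b)\times\cont{P},(-\infty,b)\times\leg_+\bigr)$ with image in $(k,b)\times\cont{P}$ and $k<a$, one stretches the cylindrical slab $[a,b]\times\leg_+ \subset L$ via the compactly supported exact Lagrangian isotopy $\phi_t(s,p)=(s+t\rho(s),p)$, where $\rho$ is chosen so that $L_1 = \phi_1(L)$ is cylindrical over $\leg_+$ on all of $(k,b)$. This isotopy is realized by an ambient Hamiltonian isotopy $h_t$, and then $h_1^{-1}\circ\psi_0$ is a relative embedding into $L_{-\infty}^0$ of the \emph{same} radius $r$. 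This Hamiltonian stretching of $L$ is the idea you are missing.
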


Theorems~\ref{thm:ub}, \ref{thm:lower-isolated-chord}, and \ref{thm:neg-end} combine to give precise calculations of some fundamental cobordisms
that are collared near the top.

\begin{cor} \label{cor:calc}   Suppose $L \subset \rr \times \PR$ is a fundamental cobordism that is cylindrical outside $[s_-, -\epsilon]$, for some $\epsilon > 0$.  If the longest Reeb chord of $\leg_{+}$ has height $r$ and is frontwise doubly extendable, then
$$ w\left(  L_{-\infty}^0 \right)= 2\mc_{L}(\leg_{+}) = 2\Mc(\leg_{+}) = 2r. $$
\end{cor}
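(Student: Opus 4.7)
The plan is to sandwich $w(L_{-\infty}^0)$, $\mc_L(\leg_+)$, $\Mc(\leg_+)$, and $r$ between $2r$ on the bottom and $2r$ on the top by chaining Theorems \ref{thm:ub}, \ref{thm:lower-isolated-chord}, and \ref{thm:neg-end} together with a direct bound on the fundamental capacity coming from the action filtration.

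First, I would invoke Theorem~\ref{thm:ub} to obtain
\[
w\left(L_{-\infty}^{0}\right) \leq 2\mc_L(\leg_+) \leq 2\Mc(\leg_+),
\]
which handles the two rightmost inequalities in the desired chain. Next I would observe that the fundamental capacity $c(\leg_+,\aug_+)$ is built from the action-filtered linearized Legendrian contact homology of $\leg_+$, and every generator of that complex is a Reeb chord of $\leg_+$ with action at most the length of its longest Reeb chord. Since this longest chord has height $r$ by hypothesis, no class representative can have action exceeding $r$, and therefore $c(\leg_+,\aug_+) \leq r$ for every $\aug_+ \in \Aug(\leg_+)$. Taking the maximum yields
\[
\Mc(\leg_+) \leq r,
\]
so that $2\mc_L(\leg_+) \leq 2\Mc(\leg_+) \leq 2r$.

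For the matching lower bound, I would apply Theorem~\ref{thm:lower-isolated-chord} to the frontwise doubly extendable Reeb chord of height $r$ in $\leg_+$, giving
\[
2r \leq w\left((\rr \times \leg_+)_{-\infty}^{0}\right).
\]
Because $L$ is cylindrical outside $[s_-,-\epsilon]$, the collared hypothesis in Theorem~\ref{thm:neg-end} applies, so inequality \eqref{eqn:collared} gives
\[
w\left((\rr \times \leg_+)_{-\infty}^{0}\right) \leq w\left(L_{-\infty}^{0}\right),
\]
and consequently $2r \leq w(L_{-\infty}^{0})$.

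Assembling the two chains produces
\[
2r \leq w\left(L_{-\infty}^{0}\right) \leq 2\mc_L(\leg_+) \leq 2\Mc(\leg_+) \leq 2r,
\]
which forces every inequality to be an equality and proves the corollary. The main obstacle is really just the middle step: extracting the action bound $\Mc(\leg_+) \leq r$ from the definition of the fundamental capacity; this ought to be immediate from the fact that capacities of this kind are computed as infima of action levels at which certain homology classes are represented, bounded above by the maximum action of any chord generator. Everything else is a formal concatenation of the three main theorems with the collared hypothesis playing the essential role of making equation~\eqref{eqn:collared} available.
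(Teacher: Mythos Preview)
Your proof is correct and follows essentially the same approach as the paper: both arguments sandwich $w(L_{-\infty}^0)$ between $2r$ below (via Theorem~\ref{thm:lower-isolated-chord} and the collared inequality~\eqref{eqn:collared} of Theorem~\ref{thm:neg-end}) and $2r$ above (via Theorem~\ref{thm:ub} together with the observation that the fundamental capacity is bounded by the height of the longest Reeb chord). The only cosmetic difference is that the paper justifies $\Mc(\leg_+)\leq r$ by citing that the fundamental capacity is always equal to the height of some Reeb chord, whereas you argue it directly from the action filtration; both are valid.
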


\begin{proof}  Combining Theorem~\ref{thm:lower-isolated-chord} and Equation~(\ref{eqn:collared}) from Theorem~\ref{thm:neg-end}, we find that 
$$2r \leq w\left((\rr \times \leg_+)_{-\infty}^0 \right)\leq w\left(  L_{-\infty}^0 \right).$$
By Theorem~\ref{thm:ub} and the fact that  $\mc_{L}(\leg_{+})$ and $\Mc(\leg_{+})$ are always the height of a Reeb chord, we find
$$w\left(  L_{-\infty}^0 \right) \leq 2\mc_{L}(\leg_{+}) \leq 2\Mc(\leg_{+}) \leq 2r. $$
\end{proof}

 \begin{ex} \label{ex:sharp} Corollary~\ref{cor:calc} immediately implies the following calculations.

\begin{enumerate}
\item  \label{item:unknots} If $U^n(r)$ denotes the $n$-dimensional Legendrian unknot with precisely one Reeb chord of height $r$, then
$$ w\left( \left( \rr \times U^n(r) \right)_{-\infty}^0 \right) = 2 r.$$

\item If $T^{1}(r)$ denotes the $1$-dimensional Legendrian trefoil shown in Figure~\ref{fig:ex:trefoil}, then
\[ w\left( \left( \rr \times T^1(r) \right)_{-\infty}^0 \right) = 2 r.\]

\item \label{item:isotopic}
More generally, let $\leg \subset J^{1}\rr^{n}$ be a connected Legendrian that admits an augmentation.   Suppose that the front of $\leg$ is contained in a box of height $s$,
and let $\leg^{\#}(r)$ be the Legendrian submanifold
constructed as a cusp connect sum of $\leg$ and $U^{n}(r)$, with $r > s$, as shown in Figure~\ref{fig:connect-sum}; this
construction appears, for example,  in \cite{bst:construct}, \cite{rizell:surgery}, and \cite[\S4]{ees:high-d-geometry}. As in the examples above, we obtain:
\[ w\left( \left( \rr \times \leg^{\#}(r) \right)_{-\infty}^0 \right) = 2 r.\]

\item If $L$ is a fundamental cobordism that is 
cylindrical outside  $[s_-, -\varepsilon]$ and has positive end equal to $U^{n}(r)$, $T^{1}(r)$, or $\leg^\#(r)$,
 then
 $$w\left( L_{-\infty}^0 \right) = 2r.$$
\end{enumerate}
 \end{ex}

 \begin{figure}
	\labellist
	\small
	\pinlabel $r$ [r] at 210 62
	\endlabellist

\centerline{\includegraphics{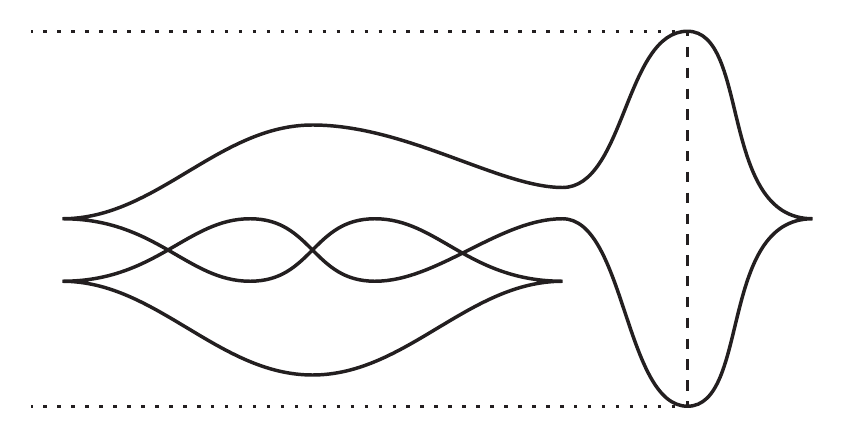}}
\caption{The Legendrian trefoil $T^{1}(r)$.   }
\label{fig:ex:trefoil}
\end{figure}

  \begin{figure}

	\labellist
	\small
	\pinlabel $s$ [l] at -4 70
	\pinlabel $r$ [r] at 290 95
	\endlabellist

\centerline{\includegraphics[width=4in]{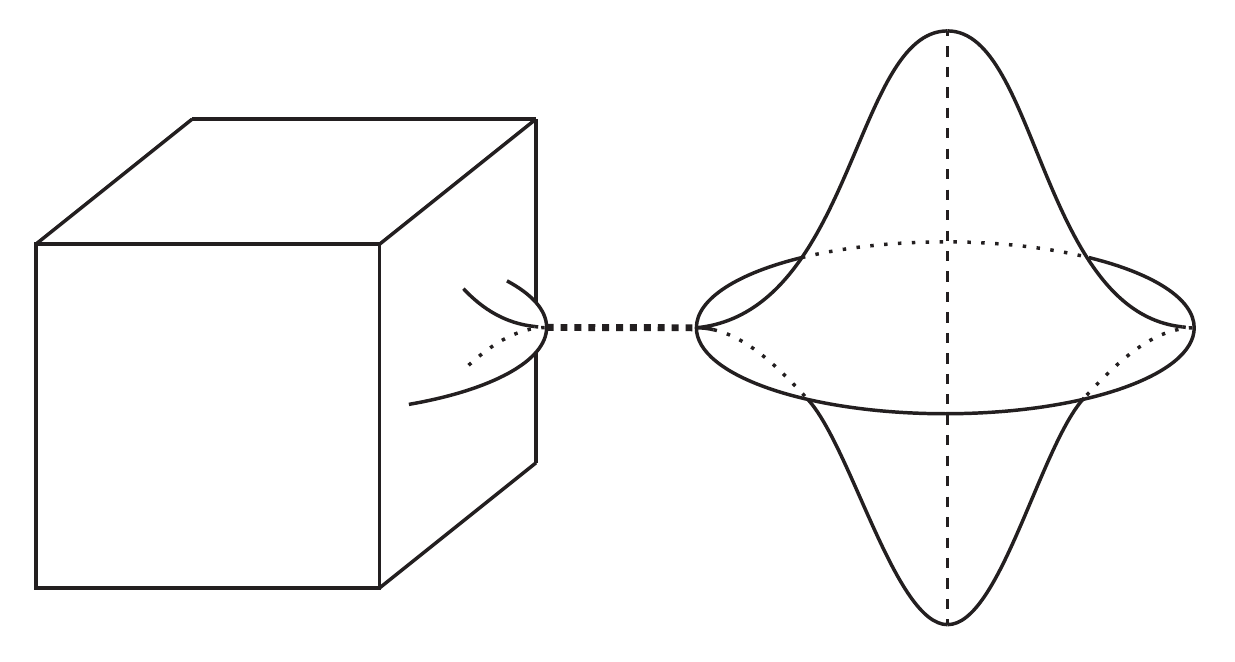}}
\caption{The setup for forming the connect sum between an $n$-dimensional Legendrian $\leg$ at left and a standard $n$-dimensional unknot $U^{n}(r)$ at right.}  
\label{fig:connect-sum}
 \end{figure}

\begin{rem}
Item~(\ref{item:isotopic}) in Example~\ref{ex:sharp} shows that any connected Legendrian $\leg \subset J^1\rr^n$ that admits an augmentation is Legendrian isotopic a Legendrian  where the upper and lower bounds given by Theorem~\ref{thm:ub}  and Theorem~\ref{thm:lower-isolated-chord} are sharp.
  Calculating $w\left( \left( \rr \times \leg \right)_{-\infty}^0 \right)$ in general is more challenging.  In particular, if none of the long Reeb chords of a Legendrian $\leg$ are frontwise doubly extendable, then there will be gaps between the upper and lower bounds that we construct in this paper.
   \end{rem}

 \subsection{Connections to the Length of a Cobordism}
\label{ssec:length-intro}

We  finish our investigations by connecting the relative Gromov width to the length between Legendrians  studied in \cite{josh-lisa:cob-length}.  Given Legendrians $\leg_{\pm}$, the {\bf Lagrangian cobordism length} $\ell(\leg_{-}, \leg_{+})$ is defined to be:
\begin{align*}
\ell(\leg_{-}, \leg_{+}) = \inf \{ s_+ - s_- : &\,  \exists \text{ Lagrangian cobordism $L$ from $\leg_{-}$ to $\leg_{+}$}\\
&\text{that is cylindrical outside $[s_{-}, s_{+}]$} \}.
\end{align*}
If there does not exist a Lagrangian cobordism from $\leg_{-}$ to $\leg_{+}$, then we define $\ell(\leg_{-}, \leg_{+}) := +\infty$.   Recall that the Lagrangian
cobordisms under consideration are exact and satisfy that other conditions of Definition~\ref{defn:cobordism}.
One of the key observations of \cite{josh-lisa:cob-length} was that the cobordism length exhibits flexibility (resp. rigidity) when $\leg_+$ is, in a sense,  larger (resp. smaller) than $\leg_-$. 
 The final main result of this paper is that the length of a  fundamental cobordism is bounded below by a ratio of relative widths.

\begin{thm} \label{thm:length-width}  Suppose $L$ is a fundamental cobordism from $\leg_{-}$ to $\leg_{+}$ that is cylindrical outside $[-s, 0]$.
Then:
\begin{equation} \label{eqn:length-gen}
\ln \left( \frac{  w\left( (\rr \times \leg_-)_{-\infty}^0 \right)}{2 \Mc(\leg_+)} \right) \leq s. 
\end{equation}
If, in addition,  the upper bound to $w\left( (\rr \times \leg_+)_{-\infty}^{0} \right)$ given by  Theorem~\ref{thm:ub} is realized, then
\begin{equation} \label{eqn:length-width}
\ln \left( \frac{  w\left( (\rr \times \leg_-)_{-\infty}^0 \right)   }{w\left( (\rr \times \leg_+)_{-\infty}^{0} \right)   } \right) \leq s.
\end{equation}
\end{thm}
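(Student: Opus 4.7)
The proof should be almost immediate from the already-established results in the paper, so I expect it to be a short chaining argument with no serious obstacle; the main content is lining up the constants correctly.

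The plan is to sandwich $w(L_{-\infty}^0)$ between the two previously-derived bounds. First I would invoke Theorem~\ref{thm:neg-end}, specifically inequality~\eqref{eqn:neg-end}, with $s_- = -s$, to obtain
\[
e^{-s}\, w\!\left((\rr \times \leg_-)_{-\infty}^0\right) \leq w\!\left(L_{-\infty}^0\right).
\]
Since $L$ is a fundamental cobordism, Theorem~\ref{thm:ub} applies and yields
\[
w\!\left(L_{-\infty}^0\right) \leq 2\mc_L(\leg_+) \leq 2\Mc(\leg_+).
\]
Combining these two bounds gives $e^{-s}\, w\!\left((\rr \times \leg_-)_{-\infty}^0\right) \leq 2\Mc(\leg_+)$, and rearranging and taking logarithms produces inequality~\eqref{eqn:length-gen}.

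For \eqref{eqn:length-width}, the additional hypothesis that the upper bound in Theorem~\ref{thm:ub} is realized for the trivial cobordism $\rr \times \leg_+$ means precisely that $w\!\left((\rr \times \leg_+)_{-\infty}^0\right) = 2\Mc(\leg_+)$ (the inequalities $w \leq 2\mc \leq 2\Mc$ must all collapse to equalities). Substituting this equality for $2\Mc(\leg_+)$ in the chain above converts \eqref{eqn:length-gen} into \eqref{eqn:length-width}.

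The only thing to be careful about is the sign convention for the bottom end of the cobordism: since $L$ is cylindrical outside $[-s, 0]$, the scaling factor from Theorem~\ref{thm:neg-end} is $e^{-s}$, which upon moving to the other side of the inequality and taking a logarithm contributes the $+s$ on the right-hand side. I do not anticipate any analytic or geometric difficulty beyond this bookkeeping, since the heavy lifting has already been done in Theorems~\ref{thm:ub} and \ref{thm:neg-end}.
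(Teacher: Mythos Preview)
Your proposal is correct and matches the paper's proof essentially line for line. The only cosmetic difference is that the paper cites Lemma~\ref{lem:heights} together with the inclusion $(\rr\times\leg_-)_{-\infty}^{-s}\subset L_{-\infty}^0$ directly, whereas you invoke the packaged form in Theorem~\ref{thm:neg-end}; these are the same step.
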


\begin{proof}  Suppose that $L$ is a fundamental Lagrangian cobordism from $\leg_-$ to $\leg_+$ that is cylindrical outside $[-s, 0]$.
Then Lemma~\ref{lem:heights}, the fact that $(\rr \times \leg_-)_{-\infty}^{-s} \subset L_{-\infty}^0$, and Theorem~\ref{thm:ub} imply
$$  e^{-s}w(   (\rr \times \leg_-)_{-\infty}^0  )  =  w((\rr \times \leg_-)_{-\infty}^{-s})  \leq w(L_{-\infty}^0) \leq 2\mc_L(\leg_+) \leq 2\Mc(\leg_+),$$
and the result follows.
\end{proof}  

We can apply this to get lower bounds to cobordism lengths between particular Legendrians.  The corollary below follows immediately from Example~\ref{ex:sharp}(1) and Theorem~\ref{thm:length-width}.
 
\begin{cor}   \label{cor:lengths}
\begin{enumerate}
\item  If $U^n(r_\pm) \subset J^{1}\rr^{n}$ is the $n$-dimensional Legendrian unknots described in Example~\ref{ex:sharp}(\ref{item:unknots}), then
$$\ln \left( \frac{ r_- }{ r_+   } \right) \leq \ell\left( U^n(r_-),   U^n(r_+) \right).$$
\item Given $\leg_{\pm} \subset J^{1}\rr^{n}$, where $\leg_{+}$ is connected and each component of $\leg_{-}$ admits an augmentation,
construct $\leg^{\#}_{\pm}(r_{\pm})$ from $\leg_{\pm}$ as in Example~\ref{ex:sharp}(\ref{item:isotopic}).  The bound above  generalizes to:
$$\ln \left( \frac{ r_- }{ r_+   } \right) \leq \ell\left( \leg_{-}^{\#}(r_-),   \leg_{+}^{\#}(r_+) \right).$$
\end{enumerate}
\end{cor}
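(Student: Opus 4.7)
The plan is to deduce the corollary directly from Theorem~\ref{thm:length-width} combined with the explicit width calculations of Example~\ref{ex:sharp}.

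For part (1), I first note that if no Lagrangian cobordism from $U^n(r_-)$ to $U^n(r_+)$ exists, then $\ell(U^n(r_-), U^n(r_+)) = +\infty$ and the inequality holds trivially. Otherwise, I fix a Lagrangian cobordism $L$ from $U^n(r_-)$ to $U^n(r_+)$ that is cylindrical outside $[-s,0]$, and check that $L$ is a fundamental cobordism in the sense of Definition~\ref{defn:fun-cobord}: the positive end $U^n(r_+)$ is connected, both $U^n(r_\pm)$ are horizontally displaceable since they lie in $J^1\rr^n$, and $U^n(r_-)$ admits an augmentation because it has a single Reeb chord. Example~\ref{ex:sharp}(\ref{item:unknots}) then gives $w\bigl((\rr \times U^n(r_+))_{-\infty}^{0}\bigr) = 2 r_+ = 2\Mc(U^n(r_+))$, so the upper bound of Theorem~\ref{thm:ub} is realized on the positive end. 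Inequality~(\ref{eqn:length-width}) therefore applies and yields
\begin{equation*}
\ln\left( \frac{r_-}{r_+} \right) \;=\; \ln\left( \frac{2 r_-}{2 r_+} \right) \;\leq\; s,
\end{equation*}
after which taking the infimum over all such $s$ completes part (1).

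Part (2) follows by the same argument once I verify the fundamental cobordism hypotheses for cobordisms between the cusp connect sums $\leg_-^{\#}(r_-)$ and $\leg_+^{\#}(r_+)$: the positive end is connected because $\leg_+$ is, horizontal displaceability is automatic inside $J^1\rr^n$, and the negative end admits an augmentation because each component of $\leg_-$ does and the cusp connect sum with a single-chord unknot of sufficiently large height preserves the existence of augmentations (this is already the key input used in Example~\ref{ex:sharp}(\ref{item:isotopic})). Having verified these hypotheses, I identify $w\bigl((\rr \times \leg_{\pm}^{\#}(r_\pm))_{-\infty}^0\bigr) = 2 r_\pm$ from Example~\ref{ex:sharp}(\ref{item:isotopic}) at each end, so the upper bound of Theorem~\ref{thm:ub} is again realized on the positive end, and (\ref{eqn:length-width}) yields the same conclusion with $r_\pm$ in place of the unknot heights.

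I expect the only real obstacle to be bookkeeping around the hypotheses of the ``fundamental cobordism'' definition, in particular the claim that the cusp connect sum of an augmented Legendrian with a standard unknot still carries an augmentation; everything else is a direct substitution of Example~\ref{ex:sharp} into the length bound of Theorem~\ref{thm:length-width}.
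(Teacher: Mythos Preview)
Your proof is correct and follows the same route as the paper, which simply states that the corollary ``follows immediately from Example~\ref{ex:sharp}(1) and Theorem~\ref{thm:length-width}''; you have filled in the verification of the fundamental cobordism hypotheses and the substitution of the width values. One small remark on part~(2): you invoke the \emph{equality} $w\bigl((\rr \times \leg_-^{\#}(r_-))_{-\infty}^0\bigr) = 2r_-$ from Example~\ref{ex:sharp}(\ref{item:isotopic}), but that example assumes the Legendrian is connected, which is not hypothesized for $\leg_-$; fortunately only the inequality $w_- \geq 2r_-$ is needed, and that comes directly from Theorem~\ref{thm:lower-isolated-chord} applied to the doubly extendable chord of the attached unknot, with no connectedness or augmentation hypothesis required at the negative end.
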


\begin{rem}
Statement (1) appears as the upper bound in  \cite[Theorem 1.1]{josh-lisa:cob-length}; Statement (2) strengthens the lower bound given in
 \cite[Proposition 6.1]{josh-lisa:cob-length}.
\end{rem}

\subsection{Outline of the Paper}

The remainder of the paper is organized as follows.  After recalling background notions and setting precise definitions in Section~\ref{sec:background}, we set down some basic facts about relative Gromov widths of cobordisms in Section~\ref{sec:first-results}; this section includes a proof of Theorem~\ref{thm:neg-end}. We describe the constructions necessary to prove Theorem~\ref{thm:lower-isolated-chord} in Section~
\ref{sec:lower}.  Section~\ref{sec:j-curves} provides the background necessary to understand the $J$-holomorphic curves used in the proof of Theorem~
\ref{thm:ub}, while Section~\ref{sec:capacity} contains the definition of the fundamental capacity.  Knowledge of 
those $J$-holomorphic curves and the fundamental capacity is put to use in Section~\ref{sec:upper}, where we prove Theorem~\ref{thm:ub}.

\subsection*{Acknowledgements}

We thank Georgios Dimitroglou Rizell, Michael Sullivan, and Egor Shelukhin for stimulating conversations.  We are extremely appreciative to the referee for pointing out some
errors in earlier versions of this paper.  The authors gratefully acknowledge the hospitality of the Institute for Advanced Study during portions of the preparation of this paper.

\section{Background Notions}
\label{sec:background}

In this section, we recall the definitions of our main objects of study: Legendrian submanifolds and Lagrangian cobordisms between Legendrian submanifolds.  We assume basic familiarity with these notions; see, for example, Etnyre's survey on Legendrian submanifolds \cite{etnyre:knot-intro} and Audin, Lalonde, and Polterovich \cite{audin-lalonde-polterovich} on Lagrangian submanifolds.

\subsection{Legendrian Submanifolds}

A \dfn{Legendrian submanifold} $\leg$ of a contact $(2n+1)$-manifold $(Y, \ker \alpha)$ is an $n$-dimensional submanifold whose tangent spaces lie in the contact hyperplanes $\ker \alpha$.  A \dfn{Reeb chord} of $\leg$ is an integral curve of the Reeb vector field of $\alpha$ whose endpoints both lie on $\leg$.  Let the collection of Reeb chords of $\leg$ be denoted by $\mathcal{R}_\leg$.  The \dfn{height} of a Reeb chord $\gamma$ is simply
\begin{equation} \label{eqn:height}
h(\gamma) = \int_\gamma \alpha. 
\end{equation}

We work with closed Legendrian submanifolds in the contactization of a Liouville manifold.
Specifically, let $P$ be a {\bf Liouville manifold}:  $P$ is an open, even-dimensional manifold with a $1$-form $\lambda$ such that $d\lambda$ is symplectic.    We 
construct the {\bf contactization} $\cont{P} = P \times \rr$ with the contact form  $\alpha = dz - \lambda$.  The Reeb flow is parallel to the $\rr$ coordinate of $\cont{P}$, and Reeb chords are in bijective correspondence with double points of the projection of $\leg$ to $P$. A Legendrian submanifold is \dfn{chord generic} if those double points are transverse.

A special case of the contactization of a Liouville manifold is the $1$-jet space of a smooth manifold $M$, namely $J^1M = T^*M \times \rr$ with the canonical contact form $dz-\lambda_{can}$.  We denote the projections to the base and to the $z$ direction by $\pi_x: J^1M \to M$ and $\pi_z: J^1M \to \rr$, respectively.  The {\bf front projection} is the projection   $\pi_{xz}: J^1M \to M \times \rr$.

\subsection{Lagrangian Cobordisms} \label{ssec:cobord}
The Lagrangians we consider
live in the {\bf symplectization}  $\left(\rr \times \cont{P}, d(e^s \alpha)\right)$. 
 \begin{defn} \label{defn:cobordism}
	Given closed Legendrians $\leg_{\pm} \subset \cont{P}$, a \dfn{Lagrangian cobordism  from $\leg_{-}$ to $\leg_{+}$} is an orientable, {properly embedded,} Maslov zero, exact Lagrangian submanifold $L \subset (\rr \times \cont{P}, d(e^s\alpha))$ such that there exist  real numbers $s_- \leq s_+$ satisfying:
	\begin{enumerate}
	\item $L \cap \left( (-\infty, s_-] \times \cont{P} \right) = (-\infty, s_-] \times \leg_-$,
	\item $L \cap \left([s_+,\infty) \times \cont{P}\right) = [s_+, \infty) \times \leg_+$, and
	\item there exist constants $C_\pm$ and a function $f$ such that $e^s \alpha|_L = df$ and on $(-\infty, s_-] \times \leg_-$, $f=C_- = 0$.
		\end{enumerate}
	We say that $L$ is \dfn{cylindrical} outside of $[s_-,s_+]$. 
	\end{defn}

\section{First Results about the Width of Cobordisms}
\label{sec:first-results}

In this section, we collect some foundational results about the relative Gromov width of a Lagrangian cobordism.  
Theorem~\ref{thm:neg-end} is a special case of Theorem~\ref{thm:collar}.

  The first lemma tells us that the relative Gromov width is non-zero.

\begin{lem}\label{lem:small-ball}  For any Lagrangian cobordism $L \subset \rr \times \cont{P}$ and any $p \in L$, there exists $r > 0$ and a relative symplectic embedding
$\psi: B^{2n}(r) \hookrightarrow (\rr \times \cont{P}, L)$  with  $\psi(0) =p$. 
\end{lem}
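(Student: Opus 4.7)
The plan is to invoke the Weinstein Lagrangian neighborhood theorem to reduce the problem to constructing a small standard ball inside a model neighborhood of the zero section in a cotangent bundle. Since $L$ is an embedded Lagrangian submanifold of the symplectic manifold $(\rr \times \cont{P}, d(e^s\alpha))$, there is an open neighborhood $U$ of $p$ in $\rr \times \cont{P}$ and a symplectomorphism
\[
\Phi: (U, U \cap L) \longrightarrow (V, V \cap L_0)
\]
onto an open neighborhood $V$ of the zero section $L_0 \subset T^*L$ in the canonical symplectic structure, sending $p$ to the zero covector over $p$ and $U \cap L$ into $L_0$.

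Next I would pick a smooth chart $\varphi: W \to L$ from an open neighborhood $W \subseteq \rr^n$ of the origin, with $\varphi(0) = p$. The induced bundle map $T^*\varphi^{-1}: T^*L|_{\varphi(W)} \to T^*\rr^n \cong \rr^{2n}$ is a symplectomorphism sending the zero section to $\rr^n \subset \rr^{2n}$ with $p$ going to the origin. Composing with $\Phi$ gives a symplectomorphism from an open neighborhood $U' \subseteq U$ of $p$ in $\rr \times \cont{P}$ onto an open neighborhood of $0 \in \rr^{2n}$, which carries $U' \cap L$ bijectively onto the intersection of that neighborhood with the Lagrangian plane $\rr^n$.

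Finally, since the image is an open neighborhood of the origin in $\rr^{2n}$, I choose $r > 0$ small enough that the standard ball $B^{2n}(r)$ sits inside it. Pulling back by the composed symplectomorphism yields the required relative symplectic embedding $\psi: B^{2n}(r) \hookrightarrow (\rr \times \cont{P}, L)$ with $\psi(0) = p$ and $\psi^{-1}(L) = B^{2n}(r) \cap \rr^n$.

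There is really no serious obstacle here: the only point requiring any care is ensuring that the Weinstein identification can be arranged so that the Lagrangian $L$ corresponds to the standard linear Lagrangian $\rr^n$ near $p$, which is exactly the content of the Weinstein neighborhood theorem together with the freedom to choose the trivializing chart $\varphi$ on $L$.
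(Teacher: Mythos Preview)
Your proposal is correct and follows essentially the same approach as the paper: the paper's proof is simply the one-line remark that the lemma ``essentially follows from Weinstein's Lagrangian neighborhood theorem, suitably adjusted to non-compact Lagrangian cobordisms,'' and you have spelled out exactly that argument. Since you only need the identification in a neighborhood of the single point $p$, the non-compactness of $L$ causes no difficulty, as your local treatment implicitly reflects.
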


\begin{proof} The lemma essentially follows from Weinstein's Lagrangian neighborhood theorem, suitably adjusted to non-compact Lagrangian cobordisms.
\end{proof}

The next lemma explains why we focus our study of relative Gromov width to the lower halves of Lagrangian cobordisms.

 \begin{lem}  \label{lem:infinite-top} For any Lagrangian cobordism $L \subset \rr \times \cont{P}$ and any $-\infty \leq s_0 < \infty$, we have 
 $w(\lag{s_0}{\infty}) = \infty$.
\end{lem}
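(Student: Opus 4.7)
The plan is to exploit the dilating Liouville structure of the symplectization: on $(\rr \times \cont{P}, d(e^s\alpha))$, the translation $T_c(s,y) = (s+c, y)$ satisfies $T_c^*(e^s \alpha) = e^c \cdot e^s \alpha$ and so scales the symplectic form by $e^c$. Combined with a real rescaling of the domain ball, this should promote any small relative symplectic embedding sitting in the cylindrical top of $L$ to a relative symplectic embedding of arbitrarily large radius whose image can be pushed above any prescribed level $s_0$.

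First, I would invoke Lemma~\ref{lem:small-ball} to obtain a relative symplectic embedding $\psi : B^{2n}(r) \hookrightarrow (\rr \times \cont{P}, L)$ centered at a point $(t, q) \in (s_+, \infty) \times \leg_+$ in the cylindrical top of $L$; by shrinking $r$ if necessary, I arrange that the image of $\psi$ lies entirely inside the cylindrical region $(s_+, \infty) \times \cont{P}$, where $L = (s_+, \infty) \times \leg_+$.

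Next, for each $c \geq 0$, I would define
\[
\Psi_c(z) = T_c\bigl(\psi(e^{-c/2} z)\bigr), \qquad z \in B^{2n}(re^{c/2}).
\]
A direct computation using $T_c^* d(e^s\alpha) = e^c\, d(e^s\alpha)$ together with the identity $(e^{-c/2}\cdot)^* \omega_0 = e^{-c} \omega_0$ on $\rr^{2n}$ gives $\Psi_c^* d(e^s\alpha) = \omega_0$, so $\Psi_c$ is a symplectic embedding. Since real scaling preserves $\rr^n \subset \rr^{2n}$ and $T_c$ sends the cylindrical locus $(s_+, \infty) \times \leg_+$ into itself, the relativity condition $\Psi_c^{-1}(L) = B^{2n}(re^{c/2}) \cap \rr^n$ is inherited from the corresponding condition for $\psi$.

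Finally, given $s_0 < \infty$, I would choose $c$ large enough that $T_c$ pushes the image of $\psi$ entirely above $s_0$; this is possible because the $s$-diameter of $\psi(B^{2n}(r))$ is fixed while $T_c$ translates it upward by $c$. Then $\Psi_c$ is a relative symplectic embedding of radius $re^{c/2}$ into $\bigl((s_0, \infty) \times \cont{P},\, \lag{s_0}{\infty}\bigr)$, giving $w(\lag{s_0}{\infty}) \geq \pi r^2 e^c$ for arbitrarily large $c$, hence $w(\lag{s_0}{\infty}) = \infty$. The only delicate point is ensuring that the base embedding $\psi$ sits in the cylindrical region so that translation preserves $L$ set-theoretically, which is why the point $(t,q)$ must be chosen well above $s_+$; everything else reduces to a routine conformal rescaling check.
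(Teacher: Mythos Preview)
Your proposal is correct and is essentially the paper's own argument: both take a small relative ball in the cylindrical top, precompose with a real dilation of the domain ball, and postcompose with a translation $s \mapsto s+c$ in the symplectization direction, using that these two conformal factors cancel. Your version is slightly more careful in explicitly noting that $c$ can be taken large enough to push the image above any prescribed $s_0$, which the paper leaves implicit.
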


\begin{proof}   Suppose that $L$ is a Lagrangian cobordism from $\leg_-$ to $\leg_+$ that is cylindrical outside $[s_-, s_+]$.  Choose
$p \in L \cap (s_+,\infty) \times \cont{P}$.    By Lemma~\ref{lem:small-ball}, there exists a $r > 0$ and a relative symplectic embedding 
\[\epsilon_r:  B^{2n}(r)  \hookrightarrow \left((s_+, \infty) \times \cont{P}, (s_+, \infty) \times \leg_+ \right).\]
Fix an arbitrary $R > r$.  We will construct a relative symplectic embedding 
\[\psi: B^{2n}(R) \hookrightarrow \left((s_+, \infty) \times \cont{P}, (s_+, \infty) \times \leg_+\right)\] by precomposing and postcomposing
$\epsilon_r$ with maps that are conformally symplectic. 

To define the first map, let $\lambda = r/R$ and consider the scaling map $\kappa_\lambda: B^{2n}(R) \to B^{2n}(r)$ given by $\kappa_\lambda (\mathbf{x},\mathbf{y}) = \sqrt{\lambda}(\mathbf{x},\mathbf{y})$. Observe that $\kappa_\lambda^* \omega_0 = \lambda \omega_0$, and that $\kappa_\lambda$ preserves the Lagrangian plane $\rr^n \subset \rr^{2n}$.

For the second map, let $u =  - \ln \lambda > 0$ and  consider the translation by $u$ of the positive end of the cobordism: 
$$\begin{aligned}
\tau_u: (s_+ , \infty) \times \cont{P} &\to (s_+ + u, \infty) \times \cont{P} \\
 (s, p) &\mapsto  (s + u, p).
\end{aligned}$$
Observe that $\tau_u^* \omega = e^u \omega =  \frac{1}{\lambda} \omega$.

Putting the maps above together yields $\psi = \tau_{-\ln \lambda} \circ \epsilon_r \circ \kappa_{\lambda}$, the desired relative symplectic embedding from $B^{2n}(R)$ to $((s_+, \infty) \times \cont{P}, (s_+, \infty) \times \leg_+)$.
\end{proof}

\begin{rem}
	A similar style of argument appears in Dimitroglou Rizell's proof that if a closed Lagrangian in a symplectization has a neighborhood that is equal to a cylinder over a Legendrian, then it has infinite relative Gromov width \cite{rizell:uniruled}. Dimitroglou Rizell's argument needed to be more complicated since his Lagrangians were not cylindrical at infinity.
\end{rem}

As a result of Lemma~\ref{lem:infinite-top},  we will restrict our attention to the relative widths of negative ends of Lagrangian cobordisms, i.e. Lagrangians of the form $L_{-\infty}^b$,
where $s_+ \leq b$. 

For the special case where $L$ is cylindrical over a Legendrian, the following lemma shows that it suffices to understand the width of $L_{-\infty}^0$. The proof is analogous to that of Lemma~\ref{lem:infinite-top}.

 \begin{lem} \label{lem:heights} For any Legendrian $\leg \subset \cont{P}$, 
  $$w((\rr \times \leg)_{-\infty}^b) = e^b w((\rr \times \leg)_{-\infty}^0).$$ 
  \end{lem}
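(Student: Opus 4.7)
The plan is to mimic the proof of Lemma~\ref{lem:infinite-top}, but now using the dilation in the $s$-direction to precisely track how the symplectic area of a relative ball transforms, rather than merely to enlarge the ball. The key observation is that the translation $\tau_b: \rr \times \cont{P} \to \rr \times \cont{P}$ defined by $(s,p) \mapsto (s+b, p)$ satisfies $\tau_b^*(d(e^s\alpha)) = e^b\, d(e^s\alpha)$, and $\tau_b$ preserves the Legendrian cylinder $\rr \times \leg$ since $\leg$ has no $s$-dependence. Moreover, $\tau_b$ carries $(\rr \times \leg)_{-\infty}^0$ to $(\rr \times \leg)_{-\infty}^b$.

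To prove the inequality $w((\rr \times \leg)_{-\infty}^b) \geq e^b w((\rr \times \leg)_{-\infty}^0)$, suppose $\psi: B^{2n}(r) \hookrightarrow (\rr \times \cont{P}, \rr \times \leg)$ is a relative symplectic embedding with image in $(-\infty, 0) \times \cont{P}$. Setting $\lambda = e^{-b}$ and $R = e^{b/2} r$, the scaling map $\kappa_\lambda: B^{2n}(R) \to B^{2n}(r)$, $\kappa_\lambda(\x,\y) = \sqrt{\lambda}(\x,\y)$, satisfies $\kappa_\lambda^*\omega_0 = \lambda \omega_0$ and preserves $\rr^n \subset \rr^{2n}$. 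The composition $\psi' = \tau_b \circ \psi \circ \kappa_\lambda$ then satisfies
\[
(\psi')^*\bigl(d(e^s\alpha)\bigr) = \kappa_\lambda^*\psi^*\bigl(e^b\, d(e^s\alpha)\bigr) = e^b \kappa_\lambda^* \omega_0 = e^b \lambda\, \omega_0 = \omega_0,
\]
so $\psi'$ is a relative symplectic embedding of $B^{2n}(R)$ into $((-\infty,b) \times \cont{P}, (\rr \times \leg)_{-\infty}^b)$ of capacity $\pi R^2 = e^b \pi r^2$. Taking the supremum over $r$ yields the inequality.

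The reverse inequality $w((\rr \times \leg)_{-\infty}^0) \geq e^{-b} w((\rr \times \leg)_{-\infty}^b)$ follows by the same construction using the translation $\tau_{-b}$ and the scaling $\kappa_{e^b}$, sending a relative embedding of $B^{2n}(R)$ into $(\rr \times \leg)_{-\infty}^b$ to one of $B^{2n}(e^{-b/2}R)$ into $(\rr \times \leg)_{-\infty}^0$. Combining the two inequalities yields the claimed equality. There is no real obstacle here: the only point requiring care is the verification that the relative condition $\psi^{-1}(\rr \times \leg) = B^{2n} \cap \rr^n$ is preserved under both $\tau_{\pm b}$ and $\kappa_\lambda$, which is immediate since the former fixes $\rr \times \leg$ setwise and the latter preserves the Lagrangian plane $\rr^n$.
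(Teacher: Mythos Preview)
Your proof is correct and follows exactly the approach the paper intends: the paper's own proof of this lemma consists of the single sentence ``The proof is analogous to that of Lemma~\ref{lem:infinite-top},'' and you have carried out precisely that analogy using the conformally symplectic translation $\tau_b$ and scaling $\kappa_\lambda$. The only addition is that you made explicit the need to run the argument in both directions to obtain the equality, which is the right thing to do.
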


The next argument shows that regions of a Lagrangian cobordism that  are cylindrical over 
a Legendrian $\leg$ can be stretched downward,  which
allows us to get a lower bound for the relative width of a Lagrangian cobordism from the width of
$(\rr \times \leg)_\infty^0$.

\begin{thm} \label{thm:collar} Suppose $L \subset \rr \times \PR$ is a Lagrangian cobordism that is cylindrical
outside $[s_-, 0]$.  Suppose for $a < b \leq 0$, there exists a  Legendrian $\leg \subset \PR$ 
so that 
$$L \cap \left([a,b] \times \cont{P}\right) =  [a, b] \times \leg.$$
Then
$$w\left(  (\rr \times \leg)_{-\infty}^{b} \right) \leq w\left(L_{-\infty}^0\right).$$
\end{thm}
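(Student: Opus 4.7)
The strategy is to imitate the translation-and-conformal-rescaling technique from the proof of Lemma~\ref{lem:infinite-top}: take a nearly-optimal relative symplectic embedding of the cylinder $(\rr \times \leg)_{-\infty}^b$ and transport its image into the cylindrical slab $[a,b] \times \cont{P}$, inside which $L$ coincides with $[a,b] \times \leg \subset L_{-\infty}^0$.

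Given $\epsilon > 0$, I would first select a relative symplectic embedding
\[ \psi \colon B^{2n}(r) \hookrightarrow \left((-\infty, b) \times \cont{P},\, (-\infty, b) \times \leg\right) \]
with $\pi r^2 > w((\rr \times \leg)_{-\infty}^b) - \epsilon$. Because the image is compact, it lies in a slab $[c_0, b_0] \times \cont{P}$ with $b_0 < b$. For $T > 0$, I then follow the recipe of Lemma~\ref{lem:infinite-top}: precompose $\psi$ with the conformal shrinking $\kappa \colon B^{2n}(re^{T/2}) \to B^{2n}(r)$ defined by $\kappa(v) = e^{-T/2} v$, and postcompose with the $s$-translation $\tau_T(s,p) = (s+T, p)$. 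The computation from Lemma~\ref{lem:infinite-top} shows that $\psi_T := \tau_T \circ \psi \circ \kappa$ is a relative symplectic embedding $B^{2n}(re^{T/2}) \hookrightarrow (\rr \times \cont{P}, \rr \times \leg)$ whose image equals the $s$-translate of $\psi(B^{2n}(r))$ by $T$, hence is contained in $[c_0 + T, b_0 + T] \times \cont{P}$. Choosing any $T \in (a - c_0,\, b - b_0)$ places this image inside $(a,b) \times \cont{P}$, where $L$ is cylindrical over $\leg$; consequently $\psi_T$ is automatically a relative symplectic embedding into $L_{-\infty}^0$ of area $\pi r^2 e^T \geq \pi r^2 > w((\rr \times \leg)_{-\infty}^b) - \epsilon$. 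Sending $\epsilon \to 0$ yields the desired inequality.

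The main technical obstacle is ensuring that the interval $(a - c_0,\, b - b_0)$ is nonempty, which amounts to bounding the $s$-extent $b_0 - c_0$ of $\psi(B^{2n}(r))$ above by $b - a$. Since the assignment $\psi \mapsto \psi_T$ only translates the image in the $s$-direction, this height cannot be shrunk by the rescaling alone; rather, $\psi$ must be chosen with small $s$-height at the outset. I would arrange this by first placing a small standard ball in a Weinstein neighborhood of a point of $\rr \times \leg$, then compressing its $s$-extent via a symplectic squish of the conjugate pair $(s,\xi) \in T^*\rr$, and finally applying the conformal rescaling above to enlarge the radius while leaving the $s$-extent of the image unchanged. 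Confirming that this construction produces embeddings whose area still approaches $w((\rr \times \leg)_{-\infty}^b)$, for any prescribed $b - a > 0$, is the most delicate part of the argument.
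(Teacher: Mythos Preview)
Your translate-and-rescale idea is natural, but the gap you flag at the end is real and your sketch does not close it.  The squish-then-rescale recipe starts from a \emph{small} ball sitting in a Weinstein tube around $\rr \times \leg$; the squish in the $(s,\xi)$-plane does shrink the $s$-extent, but it simultaneously expands the $\xi$-extent, and the image must remain inside that Weinstein neighborhood.  The subsequent conformal rescaling $\tau_T \circ (\cdot) \circ \kappa$ multiplies the area by $e^T$, with $T$ bounded above by (roughly) $b - s_0$ if the squished ball sits near height $s_0$.  Chasing the constants, the areas you can reach this way are controlled by the fiberwise thickness of the Weinstein tube, not by $w\left((\rr\times\leg)_{-\infty}^b\right)$; there is no reason these coincide.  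In other words, you never use the datum of a nearly-optimal $\psi$ at all once you pass to the squish construction, so you cannot expect to recover the full width.

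The paper's argument avoids this difficulty by moving the \emph{target} rather than the ball.  Given any relative embedding $\psi_0: B^{2n}(r)\hookrightarrow ((-\infty,b)\times\cont{P},\,(-\infty,b)\times\leg)$ with image in $(k,b)\times\cont{P}$, one builds a compactly supported function $\rho(s)$ that equals $k-a$ on $[u,a]$, vanishes outside $[v,b]$, and has $\rho'>-1$; then $\phi_t(s,p)=(s+t\rho(s),p)$ is an exact, compactly supported Lagrangian isotopy of $L$ that stretches the cylindrical slab $[a,b]\times\leg$ down to $[k,b]\times\leg$.  Extending $\phi_t$ to an ambient Hamiltonian isotopy $h_t$ of $\rr\times\cont{P}$, one simply takes $\psi_1 = h_1^{-1}\circ\psi_0$.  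No control on the $s$-extent of $\psi_0$ is needed: whatever slab $\psi_0$ occupies, the cylinder is stretched to meet it.  This is the missing idea in your approach.
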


\begin{proof} We will show that if there exists a relative symplectic  embedding 
$\psi_0: B^{2n}(r) \hookrightarrow ((-\infty, b) \times \cont{P},(-\infty, b) \times \leg)$, 
then there also exists
a relative symplectic  embedding $\psi_1: B^{2n}(r) \hookrightarrow ((-\infty, 0) \times \cont{P},L_{-\infty}^0)$.
 
  Given the relative symplectic embedding $\psi_0$, 
suppose that $\im \psi_0 \subset (k, b) \times \PR$ for some $k < b$.    If $a \leq k$, then we can take $\psi_1 = \psi_0$.
If, on the other hand, we have $k < a$, we stretch the cylinder as follows.  Fix  constants $u,v$ so that $v < u <  a < b$, and
let $\rho(s): \rr \to \rr$ be a smooth, non-positive, compactly supported function with 
\begin{equation*}
\rho(s) = \begin{cases}
	0 & s \leq v \\
	k-a & s \in [u,a] \\
	0 & s \geq b
\end{cases}
\end{equation*}
 By an appropriate choice of $u,v$, we can guarantee that
$\rho'(s)>-1$, which guarantees that for all $t \in [0,1]$,  $\sigma_t(s) := s + \rho(s)t$ is an injective function. 

Next, we consider the isotopy of $L$ given by
\[\begin{aligned}
\phi: [0,1] \times L   &\to  \rr \times \PR  \\
(t, s, p) &\mapsto (\sigma_t(s), p).
\end{aligned}\]
We write $\phi_t: L \to \rr \times \cont{P}$ for the restriction of $\phi$ to $\{t\} \times L$. It is easy to verify that $L_t = \phi_t(L)$ is a $1$-parameter family of  exact 
 Lagrangian submanifolds with $L_0 = L$ and
$L_1 \cap \left( (k,b) \times \cont{P}\right) = (k, b) \times \leg$.  
It is a well known fact that exact Lagrangian isotopies of compact manifolds can be realized by Hamiltonian isotopies; see, for example,
\cite[\S 2.3]{audin-lalonde-polterovich} or \cite[\S 3.6]{oh:sym-top+floer-hom}.   Even though $L$ is not compact,
$\phi_t$ is a compactly supported, and so the proofs in the compact setting imply that 
there exists a {Hamiltonian} isotopy $h_t$ of $\rr \times \PR$ such that $h_t(L) = L_t$ and $h_t = \operatorname{id}$ when  $s \leq v$ or $s \geq b$.
 The map $\psi_1 = h_1^{-1} \circ \psi_0$ is our desired relative symplectic embedding.
\end{proof}

Observe that Theorem~\ref{thm:neg-end} follows immediately from Lemma~\ref{lem:heights} and Theorem~\ref{thm:collar}.

\section{Constructing Embeddings near Extendable Reeb Chords}
\label{sec:lower}

The goal of this section is to prove the lower bound to the relative Gromov width given in Theorem~\ref{thm:lower-isolated-chord} by
constructing  relative symplectic embeddings using a neighborhood of a suitably extendable Reeb chord.
    The Reeb chords we are interested in are characterized as follows.

  \begin{defn} \label{defn:extendable} Suppose $\gamma$ is a  Reeb chord   of $\leg \subset J^1M$ whose endpoints are disjoint from the preimages of all singularities of the front projection.   Let
   $\pi_x(\gamma) = x_0$ and $\pi_z(\gamma) = [z_-, z_+]$.  If $h=z_+ - z_-$, let $Z_+$ and $Z_-$ denote the
   forward and backward extensions of  $\pi_z(\gamma)$ to intervals of height $2h$:
   $$Z_- = [z_- - h, z_+], \qquad Z_+ = [z_-, z_+ + h].$$
   Then $\gamma$ is 
   \dfn{frontwise doubly extendable} if either 
   $$\pi_{xz}^{-1}(\{x_0\} \times Z_- ) \cap \leg = \leg \cap \partial \gamma \quad \text {or } \quad
   \pi_{xz}^{-1}(\{x_0\} \times Z_+) \cap \leg = \leg \cap \partial \gamma.$$
  \end{defn}

We are now ready to prove Theorem~\ref{thm:lower-isolated-chord}. 
  \begin{proof}[Proof of Theorem~\ref{thm:lower-isolated-chord}]  Let $\leg \subset J^1M^{n}$ be a Legendrian submanifold  with
   a frontwise doubly extendable Reeb chord $\gamma$ of height $h$ and endpoint heights of $z_\pm$.  We want to prove that
  \[2h \leq w\left((\rr \times \leg)_{-\infty}^0\right).\]
  For any  $r$ such that $\pi r^2 < 2h$, we will  construct  a relative symplectic
  embedding  $\tau: B^{2n+2}(r) \hookrightarrow ((\rr \times J^1M)_{-\infty}^0,(\rr \times \leg)_{-\infty}^0)$.  The construction of the embedding will proceed in three steps:  after setting notation more carefully, we will change the target manifold from the symplectization to the (symplectomorphic) cotangent bundle $T^*(\rp \times M)$, where certain computations are easier.  Next, we will embed a polydisk into a subset of $T^*(\rp \times M)$, sending the real part of the polydisk to the zero section.  Finally, we will adjust the embedding of the polydisk to match the Lagrangian $\rr \times \leg$ along the real part; the desired embedding of a ball follows from restricting the embedding from the polydisk to the round ball.
   
   Before beginning the key steps in the proof, let us set notation more precisely. Suppose $\gamma$ is a frontwise doubly extendable Reeb chord of height $h$ in the downward direction, i.e. we are using the interval $Z_-$ in Definition~\ref{defn:extendable}; the proof for the upward direction is analogous.  We begin by refining the neighborhood of the extended Reeb chord. {By our assumption that the endpoints of the Reeb chord are disjoint from preimages of singularities of the front projection, } there is a neighborhood $V$ of $x_0$ in $M$ such that in  $U = \pi_{xz}^{-1}(V \times Z_-)$,
   $\leg$ is a disjoint union of the
 $1$-jets of two functions $f_\pm: V \to \rr$ with $f_+ > f_-$ on $V$: 
 \begin{equation*}
 \leg \cap U= j^1(f_+) \cup j^1(f_-).
 \end{equation*}
 Fix an arbitrary $\varepsilon_1$ satisfying $0 < \varepsilon_1 < h/2$.
 By shrinking the neighborhood $V$ of $x_0$, we can assume that for all $x \in V$, we have
 \begin{equation} \label{eqn:space}
  f_+(x) - f_-(x) >  h - \varepsilon_1 \quad \text{ and } \quad
   |f_\pm(x) - z_\pm | <  \varepsilon_1/2.  
 \end{equation}
 We will restrict attention in the target of the embedding to the symplectization of $U$ relative to the symplectization of the image of $J^1(f_-)$. 

The first step in the proof is to transform the target of the embedding into a subset of a cotangent bundle. Consider 
the symplectic diffeomorphism 
\begin{equation*}
\begin{aligned}
\Psi: \rr \times J^1M &\to  T^*\rp \times T^*M\\
(s, x, y, z) &\mapsto  (e^s, z, x, e^sy).
\end{aligned}
\end{equation*}

We may parameterize the image of the cylinder over $\leg \cap U$ in $T^{*}(\rp \times M)$ as follows:
\begin{align}
	\Psi((-\infty,0) \times j^1(f_-)) &= \{(t,f_-(x),x,t\, df_-(x))\}, \label{eq:param-j1} \\
	\Psi((-\infty,0) \times j^1(f_+)) &= \{(t,f_+(x),x,t\, df_+(x))\}.
	\end{align}
Here  $t \in (0,1)$ and $x \in V$.

Our goal now is to construct a symplectic embedding of $B^{2n+2}(r)$ into $\Psi(  (-\infty, 0) \times U  ) \subset T^* \rp \times T^*M$ relative to $\Psi((-\infty, 0) \times j^1(f_-))$.  To make the capacity $\pi r^2$ more concrete, we fix an arbitrary $\varepsilon_2$ satisfying $0 <  \varepsilon_2 < 1$, and let 
$\pi r^2 = 2(h-\varepsilon_1)(1 -\varepsilon_2)$.  Since the $\varepsilon_i$ are arbitrary, we see that the supremum of the capacities of the embeddings constructed here is, indeed, $2h$.

The second major step is to construct a symplectic embedding
 $\sigma: B^2(r) \times B^{2n}(r) \hookrightarrow T^*\rp \times T^*M$ that sends the $(B^{2}(r) \cap \rr) \times (B^{2n}(r) \cap \rr^{n})$ to the
 zero section of the cotangent bundle.  To make the construction more precise, construct sets $A \subset \rp$, $B \subset \rr$, and
$C \subset V \subset M$.  We want the embedding $\sigma$ to send $B^2(r) \times B^{2n}(r)$ into $(A \times B) \times T^*C$. To define the first component of $\sigma$, let $A = [\frac{\varepsilon_2}2, 1-\frac{\varepsilon_2}{2}]$ and $B = [-h+ \varepsilon_1, h - \varepsilon_1]$; 
observe that the area of the rectangle $A \times B$ is $\pi r^2$, and thus there exists a relative symplectic embedding
$$\sigma_1 : B^2(r) \hookrightarrow (A \times B, A \times \{0\}).$$ 
 Next, 
for the domain $V \subset M$ of $f_\pm$,  choose a non-empty closed set $C \subset V $ that is diffeomorphic to $B^n$ and contains a neighborhood of $x_0$.  Thus there exists a symplectic diffeomorphism between $T^*B^n$ and $T^*C$, and so
 there exists a relative symplectic embedding
  $$\sigma_2 : B^{2n}(r)\hookrightarrow  ( T^*C, C_0),$$ 
 where $C_0$ denotes the zero section of $T^*C$. Putting the foregoing construction together, we see that $\sigma = \sigma_1 \times \sigma_2$ restricts to a
 define a relative symplectic embedding of $\sigma: B^{2n+2}(r) \to ((A \times B) \times  T^*C,  (A \times \{0\}) \times  C_0)$.
 
The final step is to adjust the embedding $\sigma$ so that its real part lies in  $\Psi((-\infty, 0) \times j^1(f_-))$ rather than the zero section of $T^*\rp \times T^*C$.  Let  $W =T^*\rp \times T^*V$, and consider the symplectic embedding 
\begin{align*}
 \phi: (W, \omega_0) &\to (T^*\rp \times T^*M, \omega_0) \\
\phi(t,u,x,y) &= (t, u + f_-(x), x, y + t\,df_-(x)).
\end{align*}

We finish the proof by defining the desired relative symplectic embedding by $\tau = \phi \circ \sigma$ and verifying that it has the necessary properties. By construction, we have $\phi(A \times B \times T^*C ) \subset \Psi( (-\infty, 0) \times U)$, 
and thus
 $\tau(B^{2n+2}(r)) \subset    \Psi( (-\infty, 0) \times U)$.
A straightforward verification shows that the image of the real part of $B^{2n+2}(r)$ under $\tau$ is parametrized by points of the form $(a,f_-(b),b,a\, df_-(b))$, which certainly lies within $\Psi(j^1(f_-))$ by Equation~(\ref{eq:param-j1}).
To show that no other points of $\tau(B^{2n+2}(r))$ are contained in
$\Psi( (-\infty, 0) \times (U \cap \leg))$,
it suffices to show that there is no point in $\phi(A \times B \times T^*C )$ of the form
$(a, f_+(c), c,  a\,df_+(c))$.   
Equation~(\ref{eqn:space}) implies that, for any $b \in B$ and $c \in V$, we have $b < h-\varepsilon_1$ and $f_+(c) - f_-(c) > h - \varepsilon_1$.  We then see that   
$b + f_-(c) < h-\varepsilon_1 + f_-(c) < f_+(c).$
Thus $\phi(a, b, c, d)$ cannot be of the form $(a, f_+(c), c,  a\,df_+(c))$, as desired.    
\end{proof}

\section{$J$-Holomorphic Curves and the Fundamental Class}
\label{sec:j-curves}

In this section, we lay out the algebraic and analytic structures that we will use to derive the upper bound in Theorem~\ref{thm:ub}.  Recall that the strategy for the proof of Theorem~\ref{thm:ub} is to find a $J$-holomorphic curve of controlled area through the center of the image of a given symplectic embedding of a ball.   In this section, we will prove the existence of appropriate $J$-holomorphic curves in Corollary~\ref{cor:uniruling}.

In Sections~\ref{ssec:j-disk} and \ref{ssec:lch}, 
we will recall the constructions underlying  Floer theory for Lagrangian cobordisms from \cite{c-dr-g-g-cobordism} and Legendrian Contact Homology (LCH) from \cite{ees:pxr}.  In Section~\ref{ssec:fundamental-class}, we outline the construction of the fundamental class  following \cite{c-dr-g-g-cobordism} and examine how the fundamental class implies the existence of
$J$-holomorphic disks that will be used in Sections~\ref{sec:capacity} and \ref{sec:upper}.

\subsection{Moduli Spaces of $J$-Holomorphic Disks}
\label{ssec:j-disk}

In this section, we define the moduli spaces of $J$-holomorphic disks that will underlie the algebraic structures defined in later sections.  We follow the language of \cite[\S3]{c-dr-g-g-cobordism}. 
The geometric background for the moduli spaces begins with two chord-generic links $\leg_{-} \cup \tleg_{-}$  and $\leg_{+} \cup \tleg_{+}$ in $\cont{P}$.  Next, consider a pair of Lagrangian cobordisms $L, \tL$ in the symplectization $\rr \times \cont{P}$  from $\leg_{-}$ to $\leg_{+}$ 
(resp. from  $\tleg_{-}$ to $\tleg_{+}$); see Definition~\ref{defn:cobordism} for the hypotheses satisfied by these Lagrangian cobordisms. 

To define the $J$-holomorphic disks themselves, we let $D_k$ denote the closed unit disk in $\cc$ with $k+1$ punctures $z_0, \ldots, z_k$ on its boundary.  A disk $D_k$ will come with a distinguished puncture $z_j$ for $j>0$, which splits $\partial D_k$ into two parts: one from $z_0$ to $z_j$ (counterclockwise) called $\partial_- D_k$, and one from $z_j$ to $z_0$ called $\partial_+ D_k$. 
Each of  $\partial_\pm D_k$ will be labeled with a Lagrangian cobordism $\underline{L}(\pm)$ as in \cite[{\S3.2.1}]{c-dr-g-g-cobordism}.

When defining moduli spaces of $J$-holomorphic disks, we use compatible almost complex structures on the symplectization $\rr \times \cont{P}$ satisfying
different conditions, depending on whether or
not the Lagrangian cobordism is cylindrical and whether or not the boundary
of the disk lies on a single Lagrangian or ``jumps'' between different Lagrangians.  
We set notation for these complex structures here and refer the reader to \cite[{\S2.2}]{c-dr-g-g-cobordism} for further details.

 The spaces of complex structures we need are the following:
\begin{itemize}
\item Almost complex structures that are cylindrical over the entire symplectization  will be denoted $\mathcal{J}^{cyl}$.  In 
particular, these almost complex structures are invariant under the conformal 
action of $\rr$ on $\rr \times \cont{P}$. Further, such an almost complex structure maps the {symplectization}  direction to the {Reeb} direction, preserves the contact planes, and is compatible with $d\lambda|_\xi$ on the contact planes. The {sub}set of cylindrical almost complex structures that come from lifting an admissible almost complex structure on $P$ is denoted $\mathcal{J}^{cyl}_\pi$.  This type of almost complex structure will be used in defining ``pure LCH moduli spaces'' in Definition~\ref{defn:lch-ms}.

\item Almost complex structures that agree with those in $\mathcal{J}^{cyl}_\pi$ outside of a compact set contained in $[s_-, s_+] \times \cont{P}$ will be denoted $\mathcal{J}^{adm}$.  Domain-dependent almost complex 
structures are defined using a path $J_t \in \mathcal{J}^{adm}$; these almost complex structures fit together to  define a map $\mathbf{J}$ on the Deligne-Mumford space of punctured disks.  These maps $\mathbf{J}$ arise in the construction of the the ``mixed LCH moduli space'' (Definition~\ref{defn:lch-mixed-ms}) and the ``Floer to LCH moduli space'' (Definition~\ref{defn:floer-lch-ms}).
\end{itemize}

Our next goal is to describe moduli spaces of $J$-holomorphic maps $u: D_k \to \rr \times \cont{P}$ with specific behaviors along the boundary and near the punctures as in \cite[\S3]{c-dr-g-g-cobordism}.  We are given  a punctured disk with a Lagrangian label $\underline{L}$ and an almost complex structure $J$ on $\rr \times \cont{P}$, possibly domain dependent. We say that a map $u: D_k \to \rr \times \cont{P}$ is \dfn{$J$-holomorphic with Lagrangian boundary conditions} if it satisfies:
\begin{enumerate}
\item[(J1)] $du \circ j = J \circ du$, and
\item[(J2)] $u(\partial_\pm D_k) \subset \underline{L}(\pm)$.   
\end{enumerate}

The relevant moduli spaces of $J$-holomorphic disks will differ in the asymptotics of their maps near the boundary punctures.  To specify those conditions, we note that a neighborhood of a boundary puncture $z_i$ of $D_k$ is conformally equivalent to a strip $S = (0,\infty) \times i[0,1] \subset \cc$, and we let $(s,t)$ denote the coordinates on $S$. 

The first type of asymptotic condition involves a Reeb chord $\gamma$ of $\leg \cup \tleg$ of length $T$. We say that a $J$-holomorphic map with Lagrangian boundary conditions  $u = (a,v): S \to \rr \times \cont{P}$ is \dfn{$\pm$-asymptotic to $\gamma$ at $z_i$} if it satisfies the following conditions in local coordinates on the strip $S$:
\begin{enumerate}
\item[(R1)] $\lim_{s \to \infty} a(s,t) = \pm \infty$, and
\item[(R2$+$)] For $+$-asymptotic, $\lim_{s \to \infty} v(s,t) = \gamma(Tt)$, or
\item[(R2$-$)] For $-$-asymptotic, $\lim_{s \to \infty} v(s,t) = \gamma(T(1-t))$.  
\end{enumerate}

The second type of asymptotic condition involves an intersection point $m \in L \cap \tL$.  We say that a $J$-holomorphic map with Lagrangian boundary conditions  $u: D_k \to \rr \times \cont{P}$ is \dfn{asymptotic to $m$ at $z_i$} if it satisfies the following condition:
\begin{enumerate}
\item[(I1)] $\lim_{z \to z_i} u(z) = m$.
\end{enumerate}

We can now define a number of moduli spaces that will be used in later arguments.  
The first moduli space is important for a single Legendrian. 

\begin{defn}[\S3.1 in \cite{c-dr-g-g-cobordism}] \label{defn:lch-ms}
	For a Reeb chord $a$ of $\leg$, a word $\mathbf{b} = b_1 \cdots b_k$ of Reeb chords of $\leg$, and a $J \in \mathcal{J}^{cyl}_\pi$, 
	we define the \dfn{pure LCH moduli space} $\ms^J_{\leg}(a;\mathbf{b})$ to be  the set  
	of $J$-holomorphic maps with Lagrangian boundary conditions labeled by $\underline{L} = \rr \times \leg$ that are $+$-asymptotic to $a$ at $z_0$ and are $-$-asymptotic to the other Reeb chords at the corresponding punctures, up to conformal reparametrization of the domain. Note that the moduli space $\ms^J_{\leg}(a;\mathbf{b})$ admits an $\rr$-action via translation in the symplectization direction. 

\end{defn}
A similar moduli space is useful for a pair of Legendrians.

\begin{defn} [\S3.2 in \cite{c-dr-g-g-cobordism}] \label{defn:lch-mixed-ms}
	The \dfn{mixed LCH moduli space} 
	$$\ms^{\mathbf{J}}_{\leg \leftarrow \tleg}\left(\mixed{a};\mathbf{b},\mixed{c}, \tb \right)$$ 
	is defined similarly to the pure LCH moduli space, with $\mathbf{J}$ induced by a path in $\mathcal{J}^{adm}$, $\mixed{a}$ and $\mixed{c}$ Reeb chords from $\tleg$ to $\leg$,  and $\mathbf{b}$ (resp. $\tb$) a word of Reeb chords of $\leg$ (resp. $\tleg$).  The Lagrangian boundary conditions have labels  $\underline{L}(-) = \rr \times \leg$ and $\underline{L}(+) = \rr \times \tleg$.	 
	 \end{defn}

We quickly review the construction of Floer cohomology groups for Lagrangian cobordisms to fix notation. Given exact Lagrangian cobordisms $L$ and $\tL$ that intersect transversally and whose Legendrian ends are disjoint, we may define the \dfn{Floer cochain complex} $FC^*(L, \tL)$ to be generated by the intersection points $L \cap \tL$ over $\ff_2$.  We grade the chain complex using the Conley-Zehnder index as in \cite[\S3.3]{c-dr-g-g-cobordism}, and we define the differential $d_{00}:FC^*(L, \tL) \to FC^{*+1}(L, \tL)$ on an intersection point $x \in L \cap \tL$ by {a count of
appropriate $J$-holomorphic curves that are schematically shown in \cite[Figure 3]{c-dr-g-g-cobordism}; see also \cite[\S 3.2.3]{c-dr-g-g-cobordism}.
 The cohomology   of this complex is the \dfn{Floer cohomology} $FH^*(L, \tL)$.} 

The next moduli space we define deals with pairs of Lagrangian cobordisms.
 
\begin{defn}[\S3.2.5 in \cite{c-dr-g-g-cobordism}] \label{defn:floer-lch-ms} Suppose    $L$ (resp. $\tL$) is a 
Lagrangian cobordism from $\leg_{-}$ to $\leg_{+}$ (resp. $\tleg_{-}$ to $\tleg_{+}$),  
 $\mixed{a}$ is a Reeb chord from $\tleg_{+}$ to $\leg_+$, $m \in L \cap \tL$,
 $\mathbf{b} = b_{1} \cdots b_{j-1}$ is a word
of Reeb chords of $\leg_{-}$,  
and $\tb = \tilde{b}_{j+1} \cdots \tilde b_{k}$ 
is a word of Reeb chords of $\tleg_{-}$.   
Given a path  $\mathbf{J} \in \mathcal{J}^{adm}$, we define the \dfn{Floer to LCH moduli space} 
$$\ms^{\mathbf J}_{L \leftarrow \tL} (\mixed{a};\mathbf{b}, m,  \tb)$$ to be the set of 
	$\mathbf{J}$-holomorphic maps with Lagrangian boundary  labels  $\underline{L}(-) = L$ and $\underline{L}(+) = \tL$
	 that are $+$-asymptotic to $a$ at $z_0$ and are $-$-asymptotic $b_{i}$ at $z_{i}$, for $i \in \{1, \dots,  j-1\}$, to $m$ at $z_{j}$,
	 and to $\tilde{b}_{i}$ at $z_{i}$, for $i \in \{ j+1, \dots, k\}$.  A schematic representation of a curve in 
	 $\ms^{\mathbf J}_{L \leftarrow \tL} (\mixed{a};\mathbf{b}, m,  \tb )$ can be found in \cite[Figure 5]{c-dr-g-g-cobordism}. 
\end{defn}	
 
As shown by Proposition 3.2 in \cite{c-dr-g-g-cobordism}, among other sources, all of these moduli spaces are transversally cut out, pre-compact manifolds for generic $J$  or $\mathbf{J}$, and hence are finite sets when their dimension is $0$.

\subsection{The Chekanov-Eliashberg DGA and its Linearizations}
\label{ssec:lch}

To define the differential graded algebra (DGA) for a Legendrian submanifold $\leg$ underlying Legendrian Contact Homology (LCH), we begin with the $\ff_2$-vector space $A_\leg$ generated by the set of Reeb chords $\mathcal{R}_\leg$ as in \cite{ees:high-d-analysis, ees:high-d-geometry, ees:pxr}.
We then define $\alg_\leg$ to be the unital tensor algebra $TA_\leg = \bigoplus_{i=0}^\infty A_\leg^{\otimes i}$.  The generators of $A_\leg$ are graded by a Conley-Zehnder index, with the grading extended to $\alg_\leg$ additively.  The gradings are well-defined up to the Maslov number of the Lagrangian projection of $\leg$ to $P$.

The differential of a Reeb chord $a \in \mathcal{R}_\leg$ counts $0$-dimensional moduli spaces from Definition~ \ref{defn:lch-ms}:
\begin{equation} \label{eq:diffl}
	\df_\leg (a) = \sum_{\dim \ms^J_{\leg}(a;\mathbf{b}) = 1} \#(\ms^J_{\leg}(a;\mathbf{b})/\rr)\, \mathbf{b}.
\end{equation}
The count is taken modulo $2$. The differential extends to all of $\alg_\leg$ via linearity and the Leibniz rule and has degree $-1$.  Compactness and gluing arguments show that $(\df_\leg)^2=0$ as in \cite{ees:high-d-analysis, ees:pxr}.

It is notoriously difficult to  extract computable invariants from the full  DGA.  One way to render the theory more computable is to use Chekanov's linearization technique \cite{chv}.  Though it is not always possible to use this technique \cite{fuchs:augmentations, fuchs-ishk, rulings}, it is quite powerful when it applies.

The starting point for Chekanov's linearization technique is an \dfn{augmentation}, which is a degree $0$ DGA map $ \aug: (\alg_\leg, \df_\leg) \to (\ff_2,0)$.  After a change of coordinates on $\alg_\leg$ defined by $\eta^\aug(a) = a + \aug(a)$, the new differential $\df^\aug = \eta^\aug \df_\leg (\eta^\aug)^{-1}$ has the property that its linear part $\df^\aug_1: A_\leg \to A_\leg$ satisfies $(\df^\aug_1)^2 = 0$.  We denote the {\bf linearized chain complex} by $LCC_*(\leg, \aug) = (A_\leg, \df^\aug_1)$. The homology groups of $LCC_*(\leg, \aug)$ are denoted $LCH_*(\leg, \aug)$ and are called the \dfn{linearized Legendrian Contact Homology (of $\leg$ with respect to \aug)}.  One may similarly define the {\bf linearized cochain complex} $LCC^*(\leg, \aug)$ with linearized codifferential $d^\aug$ and cohomology groups $LCH^*(\leg, \aug)$.

In the presence of a decomposition of the Legendrian submanifold into a link $\leg \cup \tleg$, the Reeb chords can be partitioned into \dfn{pure chords} that begin and end on the same component and \dfn{mixed chords}, with the Reeb chords from $\tleg$ to $\leg$ denoted $\mathcal{R}_{\leg \leftarrow \tleg}$. In this setting, the linearized Legendrian contact cohomology has additional structure \cite{kirill}.  We may form an augmentation $\aug_\cup$ for $\alg_{\leg \cup \tleg}$ by using augmentations $\aug$ and $\taug$ for $\leg$ and $\tleg$, respectively, on pure chords and then defining $\aug_\cup$ to be zero on the remaining mixed chords. If we let
$A_{\leg \leftarrow \tleg} \subset A_{\leg \cup \tleg}$ denote the $\ff_2$ vector space generated by $\mathcal{R}_{\leg \leftarrow \tleg}$, then it is not hard to see that the restriction of the linearized codifferential 
$d^{\aug_\cup}$ to $A_{\leg \leftarrow \tleg}$ 
yields a subcomplex of $LCC^*\left(\leg \cup \tleg, {\aug_\cup}\right)$, which we will denote
by $LCC^*\left((\leg, \aug) \leftarrow (\tleg, \taug)\right)$.  Alternatively, we may use the moduli space
from Definition~\ref{defn:lch-mixed-ms} to  directly define the codifferential of 
$LCC^*\left((\leg, \aug) \leftarrow (\tleg, \taug)\right)$ 
on a Reeb chord $\mixed{c}$ from $\tleg$ to $\leg$ by:

\begin{equation}
	d^{\aug,\taug}(\mixed{c}) = \sum_{\substack{\dim \ms^{\mathbf{J}}_{\leg,\tleg}(\mixed{a}; \mathbf{b}, \mixed{c}, \tb) = 0 \\    
	 \aug(\mathbf{b}) = 1 = \taug(\tb)} } \#\ms^{\mathbf{J}}_{\leg,\tleg}(\mixed{a}; \mathbf{b} , \mixed{c},  \tb )\, \mixed{a}.
\end{equation}

 \begin{rem} \label{rem:induced-aug} As shown in \cite{ehk:leg-knot-lagr-cob},
given an exact Lagrangian cobordism $L$ from $\leg_{-}$ to $\leg_{+}$,  there
is a DGA  morphism $\Phi_{L}: (\mathcal A(\leg_{+}), \partial_{+}) \to (\mathcal A(\leg_{-}), \partial_{-})$.  As a consequence, an augmentation
 $\aug_{-}$   of $\leg_{-}$ induces an $\aug_{+} = \aug_{+}(L, \aug_{-})$ of $\leg_{+}$ by
\begin{equation} \label{eqn:induced-aug}
\aug_{+} = \aug_{-} \circ \Phi_{L}.
\end{equation}
 
 \end{rem}

\subsection{The Fundamental Class}
\label{ssec:fundamental-class}
We now will explain some important long exact sequences that involve $LCH^{*}(\leg, \aug)$ and how the constuction of maps in this sequence
implies the existence of particular $J$-holomorphic curves.

An important structural result for the linearized Legendrian Contact Homology of a {\it horizontally displaceable} Legendrian is the duality long exact sequence \cite{high-d-duality}:\footnote{See also \cite{duality} for a precursor in $\rr^3$ and \cite{ bst:construct, josh-lisa:obstr} for a similar result for generating family homology.}

\begin{equation} \label{eq:duality}
	\xymatrix{
	 \cdots \ar[r] & LCH_k(\leg,\aug) \ar[r]^-{\rho_*} & H_k(\leg) \ar[r]^-{\sigma_*} & LCH^{n-k}(\leg, \aug) \ar[r] & \cdots
	}
\end{equation}
For a  horizontally displaceable Legendrian $\leg$, we define the \dfn{fundamental class} 
\begin{equation} \label{eqn:fun-class}
\lambda = \lambda_{\leg, [m], \aug} := \sigma_{*}[m] \in LCH^n(\leg,\aug)
\end{equation}
 for  $[m]$ a generator of $H_0(\leg)$.  It was shown in \cite{high-d-duality} that when $\leg$ is {\it connected}, the map $\sigma_*$ is injective on $H_0(\leg)$
 and thus $\lambda$ is non-zero.  When one examines the construction of $\sigma_{*}$ at the chain level, one sees that the non-triviality of the
 fundamental class implies the existence of a  $J$-holomorphic curve, for $J \in \mathcal{J}^{cyl}_\pi$, that passes through an arbitrary
  point $m \in L = \rr \times \leg$.  

For our ultimate goal of studying relative embeddings into Lagrangian cobordisms, we will need to work in a more general setting.  The long exact sequence in (\ref{eq:duality}) has been generalized to Lagrangian cobordisms:

\begin{thm}[Generalized Duality, Theorem 1.2 of \cite{c-dr-g-g-cobordism}]\label{thm:duality-general}
Suppose $L$ is a Lagrangian cobordism from $\leg_{-}$ to $\leg_{+}$, $\aug_{-}$ is an augmentation of $\leg_{-}$
and $\aug_{+}$ is  the augmentation of $\leg_{+}$ induced by $L$ from $\aug_{-}$.   If $\leg_{-}$ is horizontally displaceable,   there
is a long exact sequence
\begin{equation} \label{eq:gen-duality}
	\xymatrix{
	 \cdots \ar[r] & LCH_k(\leg_{-},\aug_{-}) \ar[r]^-{\Rho_*} & H_k(L) \ar[r]^-{\Sigma_*} & LCH^{n-k}(\leg_{+}, \aug_{+}) \ar[r] & \cdots
	}
\end{equation}
\end{thm}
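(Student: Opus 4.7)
The plan is to mimic the two-copy strategy behind the classical duality long exact sequence for a closed Legendrian (the sequence displayed in Equation~(\ref{eq:duality})) and adapt it to the cobordism setting. I would build a Hamiltonian pushoff $\tL$ of $L$, analyze the Floer cochain complex $FC^*(L,\tL)$ by decomposing its generators according to where they sit in the symplectization, and then use horizontal displaceability of $\leg_-$ to produce an acyclicity that collapses the decomposition into the desired exact sequence.

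To begin, I would choose a Morse function $f\colon L \to \rr$ that is cylindrical at infinity (a standard ``height'' function on each Legendrian end plus a small perturbation in the compact part) and let $\tL$ be the image of $L$ under a small-time Hamiltonian flow of (an extension of) $f$. At the ends this produces Legendrian pushoffs $\tleg_\pm$ of $\leg_\pm$ whose mixed Reeb chords to $\leg_\pm$ are in bijection with the critical points of $f|_{\leg_\pm}$, while the interior intersection points $L \cap \tL$ correspond to critical points of $f$ in the compact part of $L$. The augmentation $\aug_+$ on $\leg_+$ is exactly the one induced by $L$ and $\aug_-$ via Remark~\ref{rem:induced-aug}, and both augmentations transport to augmentations $\taug_\pm$ of $\tleg_\pm$ under the pushoff.

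Next, filtering $FC^*(L,\tL)$ by the symplectization coordinate of the generators gives a three-level filtration whose associated graded pieces are: a positive-end piece computing $LCH^{n-*}(\leg_+,\aug_+)$ (with the degree shift built in from the Conley--Zehnder conventions and the duality grading); a middle piece given by a Morse complex for $f$ whose cohomology recovers $H_*(L)$ by standard Morse-theoretic arguments with appropriate boundary conditions at the cylindrical ends; and a negative-end piece computing $LCH_*(\leg_-,\aug_-)$. The differentials between these levels are counted by the Floer-to-LCH moduli spaces $\ms^{\mathbf J}_{L \leftarrow \tL}(\mixed a;\mathbf b, m, \tb)$ of Definition~\ref{defn:floer-lch-ms}, with pure inputs closed off using the augmentation conditions $\aug_-(\mathbf b) = 1 = \taug_-(\tb)$; the pure LCH disks of Definition~\ref{defn:lch-ms} account for the internal differentials on each associated graded piece.

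Finally, I would invoke horizontal displaceability of $\leg_-$ to further deform $\tL$ by a compactly supported Hamiltonian isotopy in $P$ that pushes the Lagrangian projection of $\tleg_-$ off that of $\leg_-$. An SFT-compactness and maximum-principle argument then forces the Floer cohomology of the displaced pair to vanish, so the continuation-map comparison between $\tL$ and its displaced version, combined with the three-level filtration, assembles into a distinguished triangle
$$
LCH_*(\leg_-,\aug_-) \xrightarrow{\Rho_*} H_*(L) \xrightarrow{\Sigma_*} LCH^{n-*}(\leg_+,\aug_+) \to LCH_{*-1}(\leg_-,\aug_-),
$$
which unfolds to the exact sequence of the theorem. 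The hardest step will be establishing the transversality and SFT-compactness of the Floer-to-LCH moduli spaces uniformly during the displacing isotopy, and ruling out escape of $J$-holomorphic curves into the positive end despite the non-compactness of $L$ and $\tL$; horizontal displaceability of $\leg_-$ is exactly what powers the acyclicity needed to promote the three-level filtration to an exact triangle.
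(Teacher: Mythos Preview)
The paper does not actually prove this theorem: it is quoted verbatim as Theorem~1.2 of \cite{c-dr-g-g-cobordism}, and what follows the statement in Section~\ref{ssec:fundamental-class} is not a proof but a summary of the chain-level construction of the map $\Sigma_*$ (the $d_{+0}$ component of the ``Cthulhu complex'') needed later for Corollary~\ref{cor:uniruling}. So strictly speaking there is no ``paper's own proof'' to compare against.

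That said, your sketch is broadly the right strategy and is consistent with the construction the paper recounts from \cite{c-dr-g-g-cobordism}. Two points are worth flagging. First, the perturbation in the cited construction is not simply a single Morse function $f$ on $L$: one first flows by $\epsilon h$ where $h(s)=-e^s$ near $-\infty$ and $h(s)=e^s-T$ near $+\infty$, so that $\leg_-$ is pushed in the \emph{negative} Reeb direction and $\leg_+$ in the \emph{positive} Reeb direction; only after this is the result perturbed by small Morse functions $f_\pm$ on the ends and $F$ on the compact part. This asymmetry of the Reeb shifts is what makes the generators of the three filtration levels line up with $LCH_*(\leg_-,\aug_-)$, $H_*(L)$, and $LCH^{n-*}(\leg_+,\aug_+)$ rather than, say, cohomology at both ends. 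Your ``height function on each Legendrian end'' does not obviously produce this, and the identifications in Proposition~\ref{prop:identify-complexes} depend on it. Second, the role of horizontal displaceability is exactly as you say---it forces acyclicity of the total Floer complex---but it is the projection of $\leg_-$ to $P$ that gets Hamiltonian-displaced from itself in $P$, not $\tleg_-$ from $\leg_-$; this is what ``horizontally displaceable'' means. With those adjustments, your outline matches the approach the paper cites.
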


The generalized fundamental class will be defined as $\Sigma_{*}[m]$ for $[m]$ a generator of $H_{0}(L)$.
The fact that the generalized fundamental class does not vanish will imply the existence of a  $J$-holomorphic curve passing through an arbitrary point $m \in L$.  To see why, we need to understand the chain-level construction of the map $\Sigma_*$.

The first step in describing the map $\Sigma_{*}$ is to specify the geometric setting, which we take from \cite[\S 7.2 and \S8.4.2]{c-dr-g-g-cobordism}.  Given $L$  as in the statement of Theorem~\ref{thm:duality-general}, we find a small perturbation $\tL$ of $L$ as follows. We emphasize that this perturbation depends on the point $m \in L$.  

First let $h: \rr \to \rr$ be a smooth function that satisfies:
\begin{enumerate}
\item $h(s) = -e^{s}$ for $s \leq u_{-} <  s_{-}$, 
\item $h(s) = e^s - T$ for $s \geq u_{+} >  s_{+} = 0$ for some $T>0$, and
\item $\left(\frac{dh}{ds}\right)^{-1}(0)$ is a connected interval containing $[s_{-},s_{+}]$.
\end{enumerate}
A picture of such a function $h$ appears in \cite[{Figure 12}]{c-dr-g-g-cobordism}.
 Let $L^{h}$ be the image of $L$ under an time-$1$ Hamiltonian flow associated to $\epsilon h$, for a sufficiently small $\epsilon$, and let $\leg^{h}_{\pm}$ be the images of the Legendrians $\leg_{\pm}$. 
 Note that $\leg^{h}_{\pm}$ is simply a small shift of $\leg_{\pm}$ in the  
 $\pm$ Reeb direction, where $\epsilon$ is chosen so that the shift is smaller than the shortest Reeb chord of $\leg_{\pm}$. To obtain isolated Reeb chords between the Legendrians at the ends and isolated intersection points between the compact portions of the Lagrangians, we further modify $L^{h}$ to $\tL$ in two steps.  For both perturbations, we use a particular Weinstein neighborhood of $L^{h}$ as constructed in \cite[\S7.2]{c-dr-g-g-cobordism}:
we symplectically identify a neighborhood $N$ of $L$ with a neighborhood $N_{0}$ of the $0$-section in $T^*L$ in such a way that when $L$ coincides with the cylinders
$\rr \times \leg_{\pm}$, $N_{0}$ coincides with a neighborhood of the $0$-section in $T^{*}(\rr \times \leg_{\pm})$, which in turn can be identified with
$\rr \times V$ where $V$ is  a neighborhood of the $0$-jet in $J^{1}\leg_{\pm}$.
With this identification, we first construct $L^{h,f}$, a non-compact perturbation of the cylindrical ends of $L^{h}$.  Let $f_\pm: \leg_\pm \to (0,\delta]$ be small, positive Morse functions. These Morse functions may be used to construct perturbations of  $\leg_{\pm}$ by taking the $1$-jets $\pm j^1 f_\pm \subset J^1\leg_\pm$. We construct $L^{h,f}$ by cylindrically extending $\leg^{h,f}_{\pm}$. Finally, we construct $\tL$ from $L^{h,f}$ by taking a particular $\delta$-small compactly supported perturbation of $L^{h,f}$ so that  
{\it 
$\tL$ is the graph of $dF$ for a Morse function $F$ that has a unique local minimum at the point $m \in L \cap \tL$. } 
 
 With the geometric background in place, we may make the following identifications:

\begin{prop}[\cite{c-dr-g-g-cobordism}, {Proposition 7.5 and Theorem 7.9}] \label{prop:identify-complexes}
Given a Lagrangian cobordism $L$ and a Lagrangian cobordism $\tL$ constructed from $L$ using functions $\epsilon h,f, F$ as above, we have:
\begin{enumerate}
\item $LCH^*\left( \left(\leg_{+}, \aug_{+} \right) \leftarrow (\leg_{+}^{h,f}, \aug_{+} ) \right) \simeq LCH^*\left(\leg_{+}, \aug_{+}\right)$, and
\item $FH^*(L, \tL) \simeq MH_{n+1-*}(F) \simeq H_{n+1-*}(L)$.  
\end{enumerate}
\end{prop}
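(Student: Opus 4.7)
The plan is to prove each of the two isomorphisms by identifying the relevant complex with a Morse-theoretic object via a degeneration of the small perturbation parameters $\epsilon, \delta$. The common theme is that, as these parameters shrink, the moduli spaces counted by the mixed LCH and Floer complexes decompose into a ``pure Morse'' part and a part governed by the LCH differential of the underlying Legendrian, and a filtration/length argument then yields the chain-level identification.

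For part (1), observe that mixed Reeb chords from $\leg_{+}^{h,f}$ to $\leg_{+}$ come in two families: \emph{short} chords, in bijection with $\crit(f_+)$, lying inside the Weinstein neighborhood of $\rr \times \leg_{+}$; and \emph{long} chords, in bijection with $\mathcal{R}_{\leg_{+}}$, obtained by shifting genuine Reeb chords of $\leg_{+}$ slightly in the Reeb direction. Filter the cochain complex $LCC^*\bigl((\leg_{+},\aug_{+}) \leftarrow (\leg_{+}^{h,f},\aug_{+})\bigr)$ by the number of long chords appearing as negative asymptotes in the moduli spaces of Definition~\ref{defn:lch-mixed-ms}. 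The SFT-compactness and gluing arguments developed in \cite{ees:pxr} and carried out in this setting by \cite{c-dr-g-g-cobordism} show that, in the degeneration $\delta \to 0$, the holomorphic disks split into (a) Morse flow trees of $f_+$ on $\leg_{+}$, contributing the Morse codifferential on the short-chord summand, and (b) configurations consisting of an LCH disk of $\leg_{+}$ with trivial strips capping off each negative puncture, reproducing $d^{\aug_{+}}$ on the long-chord summand up to a chain homotopy coming from Morse flows into short chords. The associated spectral sequence degenerates and the Morse summand is acyclic relative to the long-chord summand, yielding the stated chain-level isomorphism with $LCC^*(\leg_{+},\aug_{+})$.

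For part (2), the Lagrangian $\tL$ is, by construction, identified under the Weinstein neighborhood $N \cong T^*L$ with the graph of $dF$ on the compact portion, while on the non-compact ends the function $\epsilon h$ is Hamiltonian-generated and has $(dh/ds)^{-1}(0)$ arranged so that no intersections are created away from the compact region. Hence $L \cap \tL$ is in bijection with $\crit(F)$, with the Conley--Zehnder index of an intersection matching $n+1$ minus the Morse index; this gives the identification $FC^*(L,\tL) \cong MC_{n+1-*}(F)$ as graded vector spaces. The classical Floer argument — a neck-stretching/stretching of $F$ — then shows that for sufficiently small $\delta$ the Floer differential $d_{00}$ on $FC^*(L,\tL)$ counts only thin strips which, after rescaling, are in bijection with gradient flow lines of $F$, so the chain-level identification intertwines the differentials. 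Passing to cohomology and using the standard equivalence between Morse (co)homology and singular (co)homology on the non-compact manifold $L$ (valid because $F$ is proper and bounded below) yields $FH^*(L,\tL) \simeq MH_{n+1-*}(F) \simeq H_{n+1-*}(L)$.

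The principal technical obstacle in both parts is controlling moduli spaces uniformly as the perturbation parameters $\epsilon, \delta$ shrink: one must rule out unexpected holomorphic disks arising in the SFT-limit and confirm that the combinatorial contributions are exactly the Morse-theoretic ones. These analytic inputs are not new; they parallel the arguments of \cite{high-d-duality} in the Legendrian setting and are essentially worked out in \cite[\S 7]{c-dr-g-g-cobordism} for the cobordism setting, so most of the work is to invoke those results correctly. The grading bookkeeping — tracking the shift by $n+1$ coming from the $(n+1)$-dimensionality of $L$ and the conversion between Floer-cohomological and Morse-homological conventions — is the secondary point that must be handled with care.
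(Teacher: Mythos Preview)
The paper does not give its own proof of this proposition; it is quoted directly from \cite{c-dr-g-g-cobordism} (their Proposition 7.5 and Theorem 7.9), so there is no in-paper argument to compare your proposal against.

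That said, your sketch for part (1) contains a genuine error in the description of the generators.  You assert that the mixed Reeb chords from $\leg_{+}^{h,f}$ to $\leg_{+}$ come in two families, short chords in bijection with $\crit(f_+)$ and long chords in bijection with $\mathcal{R}_{\leg_{+}}$.  But by the construction recalled just above the proposition, $\leg_{+}^{h,f}$ is the push-off of $\leg_{+}$ in the \emph{positive} Reeb direction (the shift is by the positive quantity $\epsilon + f_{+}$).  Hence there are no short Reeb chords starting on $\leg_{+}^{h,f}$ and ending on $\leg_{+}$: any such chord would have to run against the Reeb flow.  The complex $LCC^*\bigl((\leg_{+},\aug_{+}) \leftarrow (\leg_{+}^{h,f},\aug_{+})\bigr)$ is therefore generated \emph{only} by the long chords, which are already in canonical bijection with $\mathcal{R}_{\leg_{+}}$; this is precisely the bijection the paper invokes via \cite[Remark 7.6]{c-dr-g-g-cobordism}.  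Consequently your filtration/spectral-sequence argument separating short from long chords, and the claim that a Morse summand must be shown acyclic, is not what is going on.  The content of \cite[Proposition 7.5]{c-dr-g-g-cobordism} is rather to verify that under this bijection of generators the mixed codifferential $d^{\aug_+,\aug_+}$ agrees with $d^{\aug_+}$, which is done by a degeneration of the perturbation, in spirit similar to what you describe but with no short-chord component.

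Your outline for part (2) is the standard Floer-equals-Morse argument for a $C^1$-small graphical perturbation and is correct in shape; this is indeed what \cite[Theorem 7.9]{c-dr-g-g-cobordism} carries out.
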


In the first identification, we have identified augmentations on $\leg_{+}$ and $\leg_{+}^{h,f}$  through a canonical {bijection} of Reeb chords  as in \cite[Remark 7.6]{c-dr-g-g-cobordism}.
In the second identification, $MH_{*}(F)$ refers to the Morse homology.

It follows that we may construct $\Sigma_*: H_0(L) \to LCH^{n}(\leg_{+}, \aug_{+})$ on the cochain level as a map 
$$\Sigma = \Sigma_{L, \aug_{-}}: FC^{n+1}(L,\tL) \to LCC^{n}\left((\leg_{+}, \aug_{+}) \leftarrow (\leg_{+}^{h,f},\aug_{+} ) \right).$$  
Such a map is defined as the 
$d_{+0}$ map in the Cthulu complex described in \cite[\S 4.1]{c-dr-g-g-cobordism}: for $m \in L \cap \tL$ corresponding to the unique local minimum of $F$,
we define $d_{+0}$ on the $(n+1)$-cochain $m$ by 
using $\mathbf{J}$ and the Floer to LCH moduli space defined in Definition~\ref{defn:floer-lch-ms}:
\[d_{+0}(m) :=   \sum_{\substack
{\dim \ms^{\mathbf{J}}_{L \leftarrow \tL}(\mixed{a};\mathbf{b},m, \tb) = 0 \\ \aug_{-}(\mathbf{b})  =  1 
	=  \taug_{-}(\tb)      }
	}
 \# \mathcal M_{L \leftarrow \tL}^{\mathbf J} (\mixed{a}; \mathbf{b}, m, \tb ) 
\cdot a,\]
 where $\taug_{-}$ is again the augmentation of $\tleg_{-}$ induced by $\aug_{-}$ as explained in \cite[Remark 7.6]{c-dr-g-g-cobordism}.

Following \cite[\S 8.4]{c-dr-g-g-cobordism}, 
 the {\bf fundamental class} induced by $L$, $[m] \in H_{0}(L)$, 
and the augmentation $\aug_{-}$ of $\leg_{-}$ is defined to be the image
  \begin{equation} \label{eqn:fun-class-general}
 \tlambda =   \tlambda_{L, [m], \aug_{-}}  := \Sigma_{*}([m]) \in LCH^{n}(\leg_{+}, \aug_{+}).
\end{equation} 
Under the assumption that  $L$ is {\it connected}, \cite[Proposition 8.7]{c-dr-g-g-cobordism} shows that
  $\tlambda$ agrees with the fundamental class of the Legendrian at the positive end given in  Equation (\ref{eqn:fun-class}):
\[\tlambda_{L, \aug_{-}} = \lambda_{\leg_{+}, \aug_{+}} \in LCH^{n}(\leg_{+}, \aug_{+}).\]

 We gather the conditions necessary for a nonvanishing fundamental class in the following definition:
   
 \begin{defn}  \label{defn:fun-cobord}  A Lagrangian cobordism $L$ from $\leg_{-}$ to $\leg_{+}$ (satisfying conditions
 in Definition~\ref{defn:cobordism}) is a \dfn{\cobord}   if:
 \begin{enumerate}
 \item $\leg_{-}$ and $\leg_{+}$ are both horizontally displaceable;
 \item each component of  $\leg_{-}$ admits an augmentation; and
 \item $\leg_{+}$ is connected.
  \end{enumerate}
 \end{defn}

Observe that we also want $L$ to be connected, however this is implied by the other conditions:
\begin{lem}  \label{lem:fun-cobord-connected} Any \cobord\ is connected.
\end{lem}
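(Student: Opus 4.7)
The plan is to argue by contradiction. Suppose $L = L_{0} \sqcup L'$ with $L' \neq \emptyset$. Since $\leg_{+}$ is connected and $L$ coincides with $[s_{+},\infty) \times \leg_{+}$ above $s_{+}$, exactly one component $L_{0}$ of $L$ contains the positive cylindrical end, so every component of $L'$ has empty positive cylindrical end and is either a closed exact Lagrangian in $\rr \times \cont{P}$ or an exact Lagrangian cap of some sublink of $\leg_{-}$.

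The first step is to compute the map $\Sigma_{*} \colon H_{0}(L) \to LCH^{n}(\leg_{+},\aug_{+})$ of Theorem~\ref{thm:duality-general} on each component. For a generator $[m_{0}] \in H_{0}(L_{0})$, applying \cite[Proposition 8.7]{c-dr-g-g-cobordism} to the connected sub-cobordism $L_{0}$ gives $\Sigma_{*}([m_{0}]) = \lambda_{\leg_{+},\aug_{+}}$, which is nonzero by the duality of \cite{high-d-duality}. For a generator $[m'] \in H_{0}(L')$, the chain-level $d_{+0}$ counts $J$-holomorphic disks through $m'$ with positive puncture asymptotic to a Reeb chord of $\leg_{+} \cup \leg_{+}^{h,f}$; near that puncture the disk's boundary lies in $L_{0} \cup \tL_{0}$, and since each boundary arc is continuous on $L$ (respectively $\tL$) between punctures and $L$ is a disjoint union, the entire boundary must remain in $L_{0} \cup \tL_{0}$. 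Hence it cannot pass through $m' \in L' \cap \tL'$, so $\Sigma_{*}([m']) = 0$ and $\ker \Sigma_{*}$ is nontrivial.

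The last step is to contradict exactness. By exactness of the duality long exact sequence, the image of $\rho_{*} \colon LCH_{0}(\leg_{-},\aug_{-}) \to H_{0}(L)$ contains $\ker \Sigma_{*}$, hence is nontrivial. I plan to show instead that $\rho_{*} = 0$, using a naturality identity $\rho_{*}^{L} = \iota_{*} \circ \rho_{*}^{\leg_{-}}$, where $\iota \colon \leg_{-} \hookrightarrow L$ is the inclusion of the negative end and $\rho_{*}^{\leg_{-}}$ is the corresponding map in the Legendrian duality sequence~\eqref{eq:duality} for $\leg_{-}$. Because each component of $\leg_{-}$ admits an augmentation and is horizontally displaceable, applying~\eqref{eq:duality} componentwise shows that the fundamental classes of distinct components are linearly independent in $LCH^{n}(\leg_{-},\aug_{-})$, so $\sigma_{*}^{\leg_{-}}$ is injective on $H_{0}(\leg_{-})$, forcing $\rho_{*}^{\leg_{-}} = 0$ and hence $\rho_{*}^{L} = 0$. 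The main obstacle is verifying the naturality identity at the chain level; I expect it to follow from the construction of the duality sequence in~\cite[Theorem~1.2]{c-dr-g-g-cobordism} via an SFT-style neck-stretching argument near the negative end of $L$ that degenerates the disks defining $\rho_{*}^{L}$ into cylindrical disks on $\rr \times \leg_{-}$ glued to disks in $L$ meeting an intersection point of $L \cap \tL$.
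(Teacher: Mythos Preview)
Your approach is quite different from the paper's, which gives a short, direct argument: if $L$ were disconnected, then since $\leg_+$ is connected the extra components would be exact Lagrangian caps of some sublink $\leg_-^* \subset \leg_-$. By a result of Dimitroglou Rizell \cite[Corollary~1.9]{rizell:uniruled}, the existence of such a cap forces the DGA of $\leg_-^*$ to be acyclic, and an acyclic DGA admits no augmentation, contradicting hypothesis~(2) of Definition~\ref{defn:fun-cobord}. No duality machinery, perturbations, or moduli-space counts are invoked.

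Your route through the generalized duality long exact sequence has a genuine gap that you yourself flag: the naturality identity $\rho_*^L = \iota_* \circ \rho_*^{\leg_-}$ is asserted but not proved, and ``an SFT-style neck-stretching argument'' is a description of a strategy, not a proof. There is also an unflagged issue: you claim $\sigma_*^{\leg_-}$ is injective on $H_0(\leg_-)$ by ``applying~\eqref{eq:duality} componentwise,'' but the duality sequence for a link $\leg_-$ is not the direct sum of the sequences for its components --- the linearized complex $LCC^*(\leg_-,\aug_-)$ contains mixed Reeb chords and does not split --- so the linear independence of the classes $\sigma_*([m_i])$ in $LCH^n(\leg_-,\aug_-)$ needs its own argument. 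Given that the paper's proof is three lines citing a single external result, I would use Dimitroglou Rizell's acyclicity theorem directly rather than the long exact sequence.
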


\begin{proof} Since $\leg_{+}$ is connected, a non-connected Lagrangian cobordism would only be possible
if $\leg_{-}^{*} \subset \leg_{-}$, consisting of a component or a union of components of $\leg_{-}$, admits a Lagrangian cap, which is a Lagrangian cobordism from $\leg_{-}^{*}$ to $\emptyset$.
By a result of Dimitroglou-Rizell, \cite[Corollary 1.9]{rizell:uniruled},   the DGA of $\leg_{-}^{*}$ would be acyclic, and from this it is easy to verify that 
$\leg_{-}^{*}$  cannot admit an augmentation, contradicting the fact that each component of $\leg_{-}$ admits an augmentation.  
 \end{proof}

The upshot of this discussion is the following lemma:

\begin{lem}
	If $L$ is a \cobord, then $\tlambda_{L, \aug_{-}}  \neq 0$.
\end{lem}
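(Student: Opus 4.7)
The plan is to deduce non-vanishing of $\tlambda_{L,\aug_-}$ from the analogous non-vanishing of the fundamental class at the positive end, $\lambda_{\leg_+,\aug_+}$, which is already known from the duality sequence of \cite{high-d-duality}.

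First I would note that because $L$ is a \cobord, Lemma~\ref{lem:fun-cobord-connected} guarantees that $L$ is connected, so $H_0(L)$ is generated by a single class $[m]$. Since $\leg_+$ is connected by hypothesis, a generator of $H_0(L)$ restricts to a generator of $H_0(\leg_+)$. By the identification of the two fundamental classes cited from \cite[Proposition 8.7]{c-dr-g-g-cobordism} (which requires precisely the connectedness of $L$), we have
\[
\tlambda_{L, [m], \aug_-} \;=\; \lambda_{\leg_+,[m_+], \aug_+} \;\in\; LCH^n(\leg_+,\aug_+),
\]
where $[m_+]$ is the corresponding generator of $H_0(\leg_+)$ and $\aug_+$ is the augmentation of $\leg_+$ induced by $L$ and $\aug_-$ as in Remark~\ref{rem:induced-aug}. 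Thus it is enough to show that $\lambda_{\leg_+,[m_+],\aug_+} \neq 0$.

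Next I would invoke the duality long exact sequence~\eqref{eq:duality} for $\leg_+$. Its hypotheses are satisfied: $\leg_+$ is horizontally displaceable by the fundamental cobordism assumption, and $\aug_+$ is a well-defined augmentation of $\leg_+$ (the existence of $\aug_-$ on each component of $\leg_-$ passes through the DGA map $\Phi_L$ to give $\aug_+$). The fundamental class at the top is defined as $\lambda_{\leg_+, [m_+], \aug_+} = \sigma_*[m_+]$. Since $\leg_+$ is connected, \cite{high-d-duality} establishes that the map $\sigma_* : H_0(\leg_+) \to LCH^n(\leg_+,\aug_+)$ is injective, and hence $\sigma_*[m_+] \neq 0$.

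Combining these two steps yields $\tlambda_{L, \aug_-} \neq 0$, completing the proof. I do not expect any serious obstacle, as both inputs (the identification of $\tlambda$ with $\lambda$ via connectedness, and the injectivity of $\sigma_*$ on $H_0$ of a connected horizontally displaceable Legendrian) are cited verbatim from the literature; the only thing to verify carefully is that the definition of \cobord\ matches exactly the hypotheses of the two cited results, which it does by design.
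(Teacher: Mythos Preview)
Your proposal is correct and follows essentially the same approach as the paper: the lemma is stated as ``the upshot of this discussion,'' where the discussion consists precisely of (i) the identification $\tlambda_{L,\aug_-} = \lambda_{\leg_+,\aug_+}$ from \cite[Proposition 8.7]{c-dr-g-g-cobordism} for connected $L$ (with connectedness coming from Lemma~\ref{lem:fun-cobord-connected}), and (ii) the nonvanishing of $\lambda_{\leg_+,\aug_+}$ via the injectivity of $\sigma_*$ on $H_0$ for connected, horizontally displaceable $\leg_+$ from \cite{high-d-duality}. You have simply made the implicit argument explicit.
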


 The nonvanishing of the fundamental class implies the following existence theorem of $J$-holomorphic curves: 
 
\begin{cor} \label{cor:uniruling} Suppose $L$ is  a fundamental cobordism, $\aug_{-}$ is an augmentation of $\leg_{-}$, and 
$\aug_{+}$ is the augmentation of $\leg_{+}$ induced by $L$. 
For any $m \in L$ and any generic path $\mathbf{J}$, there exists:
\begin{enumerate}
\item a perturbation $\tL$ of $L$, as constructed above from functions $\epsilon h, f_\pm$, and $F$,  with $m \in L \cap \tL$,
\item  a representative $a_1 + \cdots + a_k$ (depending on $m, L, \aug_{-}$) of the fundamental class $\lambda_{\leg_{+}, \aug_{+}}$
that is defined using the perturbation $\tL$, and
\item  for each summand $a_{i}$, $i \in \{1, \ldots, k\}$, of this representative of the fundamental class, a $\mathbf{J}$-holomorphic curve $u \in \ms^{\mathbf{J}}_{L \leftarrow \tL}(\mixed{a}_i;\mathbf{b}, m, \tb)$
with $\aug_{-}(\mathbf{b}) = 1 = \taug_{-}(\tb)$,  where $\mixed{a}_{i} \in \mathcal R_{\leg_{+} \leftarrow \leg_{+}^{h,f}}$  is the Reeb chord corresponding to $a_{i} \in \mathcal R_{\leg_{+}}$.
\end{enumerate}

\end{cor}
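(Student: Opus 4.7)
The plan is to read off Corollary~\ref{cor:uniruling} directly from the chain-level construction of the map $\Sigma$ summarized immediately before the statement, combined with the non-vanishing of $\tlambda_{L,\aug_{-}}$ asserted in the preceding lemma. The corollary essentially unpacks what ``$\Sigma_*[m]=\lambda_{\leg_{+},\aug_{+}}$'' means at the cochain level, so the work is organizational rather than analytic.

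First, I would fix the point $m \in L$ and construct the perturbation $\tL$ precisely as in the geometric setup recalled in Section~\ref{ssec:fundamental-class}: choose the Hamiltonian profile $\epsilon h$, then the Morse functions $f_{\pm}$ on $\leg_{\pm}$, and finally a $\delta$-small compactly supported Morse function $F$ on (the Weinstein neighborhood of) $L^{h,f}$ whose unique local minimum lies at $m$. Under the identification of Proposition~\ref{prop:identify-complexes}, the element $m$ represents the generator of $H_{n+1-0}(L)\cong H_{0}(L)$ in $FC^{n+1}(L,\tL)$, establishing item (1).

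Next, for the path $\mathbf{J}$ given in the hypothesis (taken generic so that all relevant moduli spaces are transversely cut out and $0$-dimensional ones are finite), apply the cochain-level map
\[
\Sigma \colon FC^{n+1}(L,\tL) \longrightarrow LCC^{n}\bigl((\leg_{+},\aug_{+}) \leftarrow (\leg_{+}^{h,f},\aug_{+})\bigr),
\]
defined by the $d_{+0}$ formula
\[
d_{+0}(m) = \sum_{\substack{\dim \ms^{\mathbf{J}}_{L \leftarrow \tL}(\mixed{a};\mathbf{b},m,\tb)=0 \\ \aug_{-}(\mathbf{b})=1=\taug_{-}(\tb)}} \# \ms^{\mathbf{J}}_{L \leftarrow \tL}(\mixed{a};\mathbf{b},m,\tb)\cdot \mixed{a}.
\]
Write this element as $\mixed{a}_{1}+\cdots+\mixed{a}_{k}$. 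By construction, $\Sigma(m)$ is a cocycle in $LCC^{n}((\leg_{+},\aug_{+})\leftarrow(\leg_{+}^{h,f},\aug_{+}))$ whose cohomology class is $\Sigma_{*}[m]=\tlambda_{L,\aug_{-}}$.

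Finally, since $L$ is a fundamental cobordism, Lemma~\ref{lem:fun-cobord-connected} gives that $L$ is connected, so Proposition~8.7 of~\cite{c-dr-g-g-cobordism} (cited in Section~\ref{ssec:fundamental-class}) identifies $\tlambda_{L,\aug_{-}}$ with the fundamental class $\lambda_{\leg_{+},\aug_{+}}$. The preceding lemma states that this class is non-zero, so in particular $k\geq 1$ and $\mixed{a}_{1}+\cdots+\mixed{a}_{k}$ is the required representative (item (2)). Because each chord $\mixed{a}_{i}$ appears with non-zero coefficient in $d_{+0}(m)$, the defining sum contains at least one non-empty moduli space $\ms^{\mathbf{J}}_{L\leftarrow\tL}(\mixed{a}_{i};\mathbf{b},m,\tb)$ with $\aug_{-}(\mathbf{b})=1=\taug_{-}(\tb)$; picking any curve in that space yields the $\mathbf{J}$-holomorphic disk required in item (3). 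The main subtlety is purely bookkeeping: matching the Reeb chords $\mixed{a}_{i}\in\mathcal{R}_{\leg_{+}\leftarrow\leg_{+}^{h,f}}$ with chords $a_{i}\in\mathcal{R}_{\leg_{+}}$ via the canonical bijection of Remark~7.6 of~\cite{c-dr-g-g-cobordism}, which is precisely what Proposition~\ref{prop:identify-complexes}(1) records. No further analysis is needed: transversality and compactness of the relevant moduli spaces have already been imported from~\cite{c-dr-g-g-cobordism}.
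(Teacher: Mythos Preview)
Your proposal is correct and is essentially the argument the paper has in mind: the paper does not supply a separate proof of the corollary, presenting it instead as an immediate unpacking of the chain-level description of $\Sigma = d_{+0}$ together with the nonvanishing lemma just before it. Your write-up makes explicit exactly those steps---construct $\tL$ with $F$ having its unique minimum at $m$, read off $d_{+0}(m)$ as a representative of $\tlambda_{L,\aug_-}=\lambda_{\leg_+,\aug_+}$, and observe that each surviving summand $\mixed{a}_i$ forces a nonempty moduli space---which is precisely the intended reasoning.
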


\section{The Fundamental Capacity}
\label{sec:capacity}

In this section, we will show that  the areas of the  $J$-holomorphic curves whose existence is guaranteed by Corollary~\ref{cor:uniruling}  
are bounded above by the  ``fundamental capacity'' of the Legendrian submanifold.  This capacity is a type of spectral invariant that was defined in \cite[\S 4.3]{josh-lisa:cob-length}, inspired by Viterbo's work \cite{viterbo:generating} on generating families and reminiscent of Hutchings' Embedded Contact Homology capacities \cite{hutchings:ech-cap}.

\subsection{Area and Energy}

Typically, a notion of energy for the $J$-holomorphic curves used to construct the Legendrian Contact Homology 
is defined with an eye to proving that the sum in Equation (\ref{eq:diffl}) is 
 finite.  To define such an energy in the setting of a Lagrangian cobordism $L$ that is cylindrical outside of $[s_{-},s_{+}] \times \cont{P}$, we use a notion of ``Lagrangian energy,'' which is a close relative to an energy introduced in \cite{c-dr-g-g}; {we use the form specified in \cite[\S3.4]{c-dr-g-g-cobordism}.}

Suppose that, for some small $\delta>0$, the cobordism $L$ is cylindrical outside of $[s_-+\delta, s_+ - \delta]$. Next, consider a smooth, increasing function
\[\varphi(s) = \begin{cases} e^{s_{-}}, & s \leq s_{-} \\ e^s, & s_{-}+\delta \leq s \leq s_{+}-\delta \\ e^{s_{+}}, & s \geq s_{+}. \end{cases}\]

\begin{defn} \label{defn:area-energy}
Given a curve $u \in \ms^J_\leg(a;\mathbf{b})$ or $u \in \ms^{\mathbf{J}}_{L \leftarrow \tL}(\mixed{a};\mathbf{b}, m, \tb)$
and a region $R_c^d = u^{-1}\left([c, d] \times J^1M\right)$, we define the \dfn{$[c,d]$-area of $u$} by
\[A_c^d(u) = \int_{R_c^d} u^*d(e^s \alpha)\] 
and the \dfn{$L$-energy of $u$} by
\[E_c^d(u) = \int_{R_c^d} u^* d(\varphi(s) \alpha).\]
\end{defn}

In order to bound the area of a $J$-holomorphic curve in the symplectization by a quantity computable from a Legendrian at the positive end, we introduce the actions of Reeb chords and intersection points.  Suppose that $L$ and $\tL$ are Lagrangian cobordisms as constructed in Section~\ref{ssec:fundamental-class} with positive ends at the Legendrians $\leg_{+}$ and $\tleg_{+}$, respectively.  Let $\rho$ and $\trho$ denote primitives of the pullbacks of $e^s \alpha$ on $L$ and $\tL$ with constant values, respectively, $C_{+}$ and $\tC_{+}$ at the positive end (and constant values $C_{-} = 0 = \tC_{-}$ at the negative ends). 
 Recall the height of a Reeb chord, $h(a)$,  from Equation~(\ref{eqn:height}).  We define the following actions as in \cite[Section 3.4]{c-dr-g-g-cobordism}:
 \begin{defn}   \label{defn:actions}  The {\bf action} of
  \begin{enumerate}
 \item a pure Reeb chord $a \in \mathcal R_{\leg_{\pm}}$ is  
 $$\mathfrak{a}(a) = e^{s_\pm} h(a);$$
\item  a mixed Reeb chord $\mixed{a} \in \mathcal R_{\leg_{+} \leftarrow \tleg_{+}}$  is  
$$\mathfrak{a}(\mixed{a}) = e^{s_+} h(\mixed{a}) + {(\tC_{+}-C_{+})};$$

\item an intersection point $m$ of $L$ and $\tL$  (where the holomorphic curve jumps from $L$ to $\tL$) is  
$$\mathfrak{a}(m) = { \trho(m)-\rho(m)}.$$
\end{enumerate}
 \end{defn}
 
We may control the actions of mixed Reeb chords and intersection points between $L$ and $\tL$ using the following lemma:
 
 \begin{lem}\label{lem:mixed-intersection-action} Given an exact Lagrangian cobordism $L$ from $\leg_{-}$ to $\leg_{+}$ with primitive $\rho$,  $m \in L_{s_{-}}^{s_{+}}$, and an arbitrary $\delta > 0$,
  it is possible to construct an exact Lagrangian perturbation $\tL$ with primitive $\trho$ as
 in Section~\ref{ssec:fundamental-class} that satisfies the following conditions:
 \begin{enumerate}
 \item If $\mixed{a} \in \mathcal R_{\leg_{+} \leftarrow \leg_{+}^{h,f}}$ and $a \in \mathcal R_{\leg_{+}}$ are identified as in \cite[Remark 7.6]{c-dr-g-g-cobordism},
  then 
  \[| \mathfrak a (\mixed{a}) - \mathfrak a(a) | < \delta.\]
  
\item At the intersection point $m \in L \cap \tL$, we have:
  $$|\mathfrak{a}(m)| < \delta.$$
  \end{enumerate}
 \end{lem}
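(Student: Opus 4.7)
The plan is to track how each of the three perturbation steps used to construct $\tL$ — the Hamiltonian flow of $\epsilon h$, the Morse-function modification of the cylindrical ends by $f_\pm$, and the final compactly-supported $\delta$-small perturbation realizing the Morse function $F$ — contributes to the two quantities we wish to estimate. The basic observation is that each of the parameters $\epsilon$, $\|f_\pm\|_\infty$, and the size of the final perturbation may be chosen arbitrarily small, so the task reduces to verifying that each contribution depends continuously on them.

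For part (1), I would first compute how a long mixed Reeb chord $\mixed{a} \in \mathcal{R}_{\leg_+\leftarrow \leg_+^{h,f}}$ corresponding to $a \in \mathcal{R}_{\leg_+}$ under the bijection of \cite[Remark 7.6]{c-dr-g-g-cobordism} has its height shifted. Since the Hamiltonian flow of $\epsilon h$ is tangent to the Reeb direction and translates the positive end of $L$ by $-\epsilon$ in the Reeb direction, and the subsequent $j^1 f_+$ modification within the Weinstein neighborhood of $\leg_+^h$ shifts the bottom endpoint of $\mixed{a}$ by $f_+(x_a)$ in the Reeb direction, one obtains
\[h(\mixed{a}) = h(a) + \epsilon - f_+(x_a),\]
so that $e^{s_+}|h(\mixed{a}) - h(a)| \leq e^{s_+}(\epsilon + \|f_+\|_\infty)$. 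Next I would control $\tC_+ - C_+$: because the Hamiltonian isotopy of $\epsilon h$ is exact with generating function of size $O(\epsilon)$, the primitive $\rho$ is altered by $O(\epsilon)$ at the positive end; the modification of the cylindrical ends shifts the primitive by $O(\|f_\pm\|_\infty)$; and the final $\delta$-small compactly supported perturbation contributes $O(\delta)$. Combining these estimates in the identity
\[\mathfrak{a}(\mixed{a}) - \mathfrak{a}(a) = e^{s_+}\bigl(h(\mixed{a})-h(a)\bigr) + (\tC_+ - C_+)\]
and choosing $\epsilon$ and $\|f_\pm\|_\infty$ sufficiently small yields part (1).

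For part (2), I would work inside the Weinstein neighborhood identification of a tubular neighborhood of $L$ with a neighborhood of the zero section in $T^*L$. In this identification $\tL$ is (by construction of the third step) the graph of $dF$ near $m$, and a primitive of $e^s\alpha|_{\tL}$ can be written as $\rho \circ \pi + F + K$ for a constant $K$ that is fixed by the normalization $\trho \equiv 0$ on the negative end. Since $m \in L \cap \tL$, the projection $\pi: \tL \to L$ in the Weinstein model satisfies $\pi(m) = m$, so
\[\trho(m) - \rho(m) = F(m) + K.\]
By construction $m$ is the unique local minimum of $F$, whose value may be prescribed as small as desired. The constant $K$ is determined by the behaviour of $F$ at the negative cylindrical end, where $F$ matches (up to sign and an $O(\epsilon)$ contribution from the Hamiltonian flow of step~1) the Morse function $f_-$; hence $|K|$ is controlled by $\epsilon + \|f_-\|_\infty$. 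Choosing these parameters small enough forces $|\mathfrak{a}(m)| < \delta$.

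The main obstacle I anticipate is the bookkeeping of the primitives $\rho$ and $\trho$ across the three-stage construction of $\tL$, and in particular the verification that the constant $K$ appearing in part (2) genuinely shrinks with the perturbation parameters once the boundary normalization is respected. Apart from this, the argument is an exercise in estimation in small parameters, with each step of the construction contributing a term whose size is a continuous function of $\epsilon$, $\|f_\pm\|_\infty$, or the final perturbation norm.
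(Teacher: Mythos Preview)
Your approach is correct in outline and reaches the same conclusion, but it proceeds quite differently from the paper's argument, and the difference is instructive.

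The paper does not decompose the construction into three stages and track each separately. Instead, it observes that the entire passage from $L$ to $\tL$ can be realized by a single Hamiltonian isotopy $\tau_t$ whose generating Hamiltonian $H_t$ is $C^1$-small on $(-\infty,u_+]\times\cont{P}$ (the $\epsilon h$ piece is bounded there, and $f_\pm$, $F$ are small by fiat). It then invokes the standard identity
\[
\tau_1^{*}(e^{s}\alpha)-e^{s}\alpha = dG,\qquad G=\int_0^1\bigl(i_{X_t}e^s\alpha - H_t\bigr)\circ\tau_t\,dt,
\]
so that $(\rho+G)\circ\tau_1^{-1}$ is a primitive for $e^s\alpha$ along $\tL$. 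Since $H_t$ is $C^1$-small below $u_+$, $G$ is $C^0$-small there (after normalizing $G\equiv 0$ near $-\infty$), and since both primitives are constant for $s\geq u_+$, they are globally $C^0$-close. Both statements of the lemma then follow at once: $\tC_+-C_+$ is small because the primitives are close, and $\mathfrak a(m)=\trho(m)-\rho(m)$ is small for the same reason.

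By contrast, you compute the height shift of $\mixed a$ explicitly, write $\trho$ in the Weinstein model as $\rho\circ\pi+F+K$, and then argue that $K$ is small by chasing the normalization through the negative end. This works, and it yields sharper formulas (e.g.\ your expression for $h(\mixed a)-h(a)$), but the bookkeeping you flag as the ``main obstacle'' is precisely what the paper's single-isotopy formula circumvents: once you know $\trho=(\rho+G)\circ\tau_1^{-1}$ with $G$ uniformly small and correctly normalized, there is no constant to track separately. So your proof is a legitimate alternative, but the paper's is shorter and avoids the step-by-step primitive accounting that you correctly identify as the delicate part.
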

 
\begin{proof}  
The construction of $\tL$ from $L$ in Section~\ref{ssec:fundamental-class} may be thought of as finding a Hamiltonian isotopy $\tau_t$ that carries $L$ to $\tL$ at time $1$.  The isotopy is generated by a Hamiltonian $H_t$ that is $C^1$-small on $(-\infty, u_+] \times \cont{P}$; to see this, note that since the $C^1$ norm of the function $h$ is bounded on $(-\infty, u_+] \times \cont{P}$, we have that $\epsilon h$ is $C^1$-small, and the functions $f$ and $F$ were chosen to be $C^1$ small from the beginning.  

A direct calculation, see for example \cite[Proposition 9.18]{mcduff-salamon}, shows that
\[\tau_1^{*}(e^{s}\alpha) - e^{s}\alpha = dG,\] where 
\[G = \int_{0}^{1} (i_{X_{t}} e^{s}\alpha - H_{t}) \circ \tau_t \, dt.\]
 Since $H_{t}$ is $C^{1}$-small on $(-\infty, u_+] \times \cont{P}$, we see that the function
 $G$ has a small $C^0$ norm on that set after adjusting by a constant so that $G \equiv 0$ for sufficiently small $s$.  Another direct calculation shows that if $\rho$ is a primitive for $e^s \alpha$ along $L$, then  $(\rho + G)\circ \tau_1^{-1}$ is a primitive for $e^s \alpha$ along $\tL$. Since both $\rho$ and $(\rho + G)\circ \tau_1^{-1}$ are constant for $s \geq u_+$, we see that the primitives of $e^s\alpha$ along $L$ and $\tL$ are $C^0$ close.  Thus,  both claims in the lemma follow.
  \end{proof}

We can bound the $[c,d]$-area of a curve $u$ in terms of actions:

\begin{prop} \label{prop:area-reeb}  Assume $c < d \leq s_{+}$.  For any curve 
 $u \in \ms^{\mathbf{J}}_{L \leftarrow \tL}(\mixed{a};\mathbf{b}, m, \tb)$, we have:
	\[A_c^d(u) < \mathfrak{a}(\mixed{a}) - \mathfrak{a}(m).\]
\end{prop}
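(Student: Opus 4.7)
The plan is to apply Stokes' theorem twice: first to reduce to the larger region $R_{-\infty}^{s_+}(u)$, and second to the ``cap'' above height $s_+$. Since $u^*d(e^s\alpha) \ge 0$ pointwise for a $J$-holomorphic curve, we have $A_c^d(u) \le A_{-\infty}^{s_+}(u)$, so it suffices to bound the latter.

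For $A_{-\infty}^{s_+}(u)$, the boundary of $R_{-\infty}^{s_+}(u) \subset D_k$ decomposes into: arcs of $\partial_- D_k$ and $\partial_+ D_k$ mapped to $L$ and $\tL$; small loops around the interior punctures (at the $-$-asymptotic Reeb chord punctures and at the intersection puncture $z_j$); and the level curve $\ell_{s_+} := u^{-1}(\{a = s_+\})$, where we write $u = (a, v)$. On Lagrangian arcs, exactness gives $(e^s\alpha)|_L = d\rho$ and $(e^s\alpha)|_{\tL} = d\trho$. Evaluating $\rho, \trho$ at the arc endpoints --- which are $\rho(m), \trho(m)$ at $z_j$, and $C_+, \tC_+$ at the points on $\ell_{s_+}$ by cylindricity and continuity of the primitives --- gives the combined contribution $(\rho(m) - C_+) + (\tC_+ - \trho(m))$. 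Small loops around the $-$-asymptotic Reeb chord punctures contribute $-\mathfrak{a}(b_i)$ and $-\mathfrak{a}(\tilde{b}_i)$ via the standard asymptotic integrals, and the loop around $z_j$ vanishes in the limit.

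The remaining contribution $e^{s_+}\int_{\ell_{s_+}} v^*\alpha$ is controlled by a second Stokes application, this time to the cap $R_{s_+}^\infty(u) := u^{-1}([s_+, \infty))$ using the form $v^*\alpha$. Since $L$ and $\tL$ are cylindrical over Legendrians above $s_+$, the Legendrian condition forces $v^*\alpha \equiv 0$ on the Lagrangian portion of $\partial R_{s_+}^\infty(u)$, while the asymptotic integral around $z_0$ yields $h(\mixed{a})$. Combined with the pointwise positivity $v^*d\alpha \ge 0$, this gives
\[\int_{\ell_{s_+}} v^*\alpha \;=\; h(\mixed{a}) \;-\; \int_{R_{s_+}^\infty(u)} v^*d\alpha \;\le\; h(\mixed{a}).\]
Assembling all contributions and substituting $\mathfrak{a}(\mixed{a}) = e^{s_+}h(\mixed{a}) + (\tC_+ - C_+)$ and $\mathfrak{a}(m) = \trho(m) - \rho(m)$, we obtain
\[A_{-\infty}^{s_+}(u) \;\le\; \mathfrak{a}(\mixed{a}) - \mathfrak{a}(m) - \sum_i \mathfrak{a}(b_i) - \sum_i \mathfrak{a}(\tilde{b}_i) \;\le\; \mathfrak{a}(\mixed{a}) - \mathfrak{a}(m),\]
using positivity of Reeb chord actions. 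Strict inequality for $A_c^d(u)$ then follows from one of several sources of slack: positive contributions whenever $\mathbf{b}$ or $\tb$ is nonempty; strict positivity of $\int_{R_{s_+}^\infty(u)} v^*d\alpha$, which is forced since a curve with an honest intersection puncture at $m$ cannot be a trivial Reeb-chord strip over $\mixed{a}$ in its cap; or strict positivity of the area in $R_{-\infty}^{s_+}(u) \setminus R_c^d$ whenever the inclusion is proper. The main technical obstacle is carefully tracking orientations and signs of the various boundary contributions across the two Stokes applications, especially in identifying the correct orientation on $\ell_{s_+}$ when it is viewed alternately as part of $\partial R_{-\infty}^{s_+}(u)$ and of $\partial R_{s_+}^\infty(u)$.
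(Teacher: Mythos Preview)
Your overall strategy---splitting the domain at the slice $\{s=s_+\}$ and applying Stokes separately below (with the form $e^s\alpha$) and on the cap (with the form $\alpha$)---is sound and does yield the proposition, but one step is misstated. The small half-circles around the negative Reeb-chord punctures do \emph{not} contribute $-\mathfrak{a}(b_i)$ in your first Stokes computation: since you are integrating $e^s\alpha$ and the curve is $-$-asymptotic there with $s\to -\infty$, the integral over the semicircle tends to zero, not to $e^{s_-}h(b_i)$. (A nonzero contribution of that form would appear only if the one-form had a constant-in-$s$ coefficient near $-\infty$.) The error is harmless for the final bound---you were going to discard those terms anyway---but it eliminates one of your claimed sources of strict inequality; you must then rely on the strict positivity of $\int_{R_{s_+}^\infty}v^*d\alpha$, which indeed holds since a curve with an intersection puncture at $m$ cannot have a trivial cap.

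The paper takes a different, somewhat cleaner route: it introduces the $L$-energy $E_c^d(u) = \int_{R_c^d} u^*d(\varphi(s)\alpha)$, where $\varphi$ is smooth, nondecreasing, equal to $e^s$ on the compact part of the cobordism, and equal to the constants $e^{s_\pm}$ at the ends. Since $e^s \le \varphi(s)$ for $s \le s_+$, one has $A_c^d(u)\le E_c^d(u)$, and a \emph{single} application of Stokes over all of $D_k$ gives
\[
\int_{D_k} u^*d(\varphi(s)\alpha) \;=\; \mathfrak{a}(\mixed{a}) - \mathfrak{a}(m) - \sum_i \mathfrak{a}(b_i) - \sum_i \mathfrak{a}(\tilde b_i),
\]
because $\varphi$ is constant near both ends (so the $b_i$ punctures now genuinely contribute, and the Lagrangian boundary terms are governed by the same primitives $\rho,\trho$). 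Your cap estimate $\int_{\ell_{s_+}} v^*\alpha \le h(\mixed{a})$ is exactly the statement that the $\varphi$-energy of the cap is nonnegative, so the cutoff $\varphi$ is effectively doing your second Stokes computation internally. The paper's route avoids tracking the level set $\ell_{s_+}$ and its orientations and makes the strict inequality immediate (since $R_c^d \subsetneq D_k$); your route makes the geometric origin of each contribution more visible.
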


\begin{proof} When $s \leq d \leq s_{+}$, $e^{s} \leq \varphi(s)$, and thus we see $A_c^d(u) \leq E_c^d(u)$.  By the compatibility of $J$ with $d(e^{s}\alpha)$
and fact that $d < \infty$, we see that 
\[E_c^d(u) < \int_{D_{k}} u^* d(\varphi(s) \alpha).\]
The strict inequality above comes from the fact that $R_{c}^{d}$ differs from  $D_{k}$ by a set of positive measure on which the integrand is strictly positive.
By Stokes' formula,  for 
$u \in \ms^{\mathbf{J}}_{L \leftarrow \tL}(\mixed{a};\mathbf{b}, m, \tb)$, we compute:
\[\int_{D_{k}} u^* d(\varphi(s) \alpha) = \mathfrak a(\mixed{a}) - \mathfrak a(m) - \sum_{b_{i}} \mathfrak a(b_{i}) -  \sum_{\tilde b_{i}}\mathfrak a(\tilde b_{i})
\leq \mathfrak a(\mixed{a}) - \mathfrak a(m).\]
The proposition follows.
\end{proof}

\subsection{Filtrations and Capacities}
\label{ssec:capacity}

In order to apply the bound in Proposition~\ref{prop:area-reeb} to the $J$-holomorphic curves guaranteed by Corollary~\ref{cor:uniruling}, we need to refine the Legendrian Contact Homology framework using an energy filtration.  In particular, we will define the fundamental capacity as in \cite{josh-lisa:cob-length}. We begin with a filtration on $A_\leg$ with respect to the height of the generating Reeb chords:  for any $w \in \rr$, define 
\[\mathcal{R}^w_\leg = \left\{ a \in \mathcal{R}_\leg \;:\; h(a) \geq w\right\}.\]
Let $F^w A^*_\leg$ be the graded vector space generated by $\mathcal{R}^w_\leg$; energy considerations show that it is, in fact, a subcomplex of $(A^*_\leg, d^\aug)$.  We then define the \dfn{Filtered Linearized Legendrian Contact Cohomology} $LCH^*_w(\leg, \aug)$ to be the homology groups of the quotient $A_\leg^* / F^wA^*_\leg$.

Let $p^w: LCH^*(\leg,\aug) \to LCH^*_w(\leg,\aug)$ be the map induced by the projection from $A^*_\leg$ to $A^*_\leg /F^wA^*_\leg$.  It is straightforward to check that for $w$ close to $0$, $p^w$ is the zero map, while for sufficiently large $w$, $p^w$ is an isomorphism.  Thus, we define:

\begin{defn} \label{dfn:capacity}
	Given a connected, horizontally displaceable Legendrian submanifold $\leg \subset \cont{P}$, an augmentation $\aug$, and its fundamental class 
	$\lambda_{\leg, \aug}$, the \dfn{fundamental capacity} $c(\leg, \aug)$ is defined to be:
\[ c(\leg, \aug) = \sup \{w \in \rr \;:\; p^w(\lambda_{\leg,\aug}) = 0\}.\]
\end{defn}

We know that $c(\leg, \aug)$ is always the height of a Reeb chord of $\leg$ \cite[Lemma 4.7]{josh-lisa:cob-length}.
Specifically, for each $x \in A_\leg$ that represents the fundamental class with  $a_x$ the  Reeb chord of minimal height with nonzero coefficient in $x$, then 
\begin{equation} \label{eq:cap-chord}
	c(\leg, \aug) = \max \{ h(a_x) \;\vert\; x \text{ represents } \lambda\}.
\end{equation}

We use {Equation~(\ref{eq:cap-chord})}, Lemma~\ref{lem:mixed-intersection-action}, and Proposition~\ref{prop:area-reeb} to refine Corollary~\ref{cor:uniruling} as follows:
 
 \begin{cor} \label{cor:fun-disk}   {  Let $L$ be a \cobord,  $\aug_{-}$ an augmentation of $\leg_{-}$, and $\aug_{+}$ the induced augmentation of $\leg_{+}$.  For any $\epsilon>0$, $m \in L$, and generic path $\mathbf J$, there exists a perturbation $\tL$, a mixed chord $\mixed{a}$ corresponding to the shortest chord in the corresponding representative of the fundamental class, and a $\mathbf{J}$-holomorphic curve $u \in \ms^{\mathbf{J}}_{L \leftarrow \tL}(\mixed{a};\mathbf{b}, m, \tb)$ with $\aug_-(\mathbf{b}) = 1 = \aug_-(\tb)$ such that for any $[c,d] \subset (-\infty, s_+]$, we have
 	\[ A^d_c(u) \leq e^{s_+} c(\leg_+, \aug_+) + \epsilon.\]
}

 \end{cor}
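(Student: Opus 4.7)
The plan is to combine the existence statement in Corollary~\ref{cor:uniruling} with the area bound from Proposition~\ref{prop:area-reeb}, using Lemma~\ref{lem:mixed-intersection-action} to absorb the small discrepancies between the mixed chord $\mixed{a}$, the pure chord $a$, and the intersection point $m$ into the error $\epsilon$.

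Fix $\epsilon>0$, $m \in L$, and a generic path $\mathbf{J}$. I would first apply Lemma~\ref{lem:mixed-intersection-action} with $\delta = \epsilon/2$ to choose the auxiliary perturbation data $\epsilon h$, $f_\pm$, and $F$ (used to construct $\tL$ in Section~\ref{ssec:fundamental-class}) small enough in $C^1$-norm that every mixed Reeb chord $\mixed{a} \in \mathcal{R}_{\leg_+ \leftarrow \leg_+^{h,f}}$ satisfies $|\mathfrak{a}(\mixed{a}) - \mathfrak{a}(a)| < \epsilon/2$ and that the intersection point $m \in L \cap \tL$ satisfies $|\mathfrak{a}(m)| < \epsilon/2$. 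With this choice of $\tL$ fixed, I would invoke Corollary~\ref{cor:uniruling} to obtain a chain-level representative $a_1 + \cdots + a_k$ of $\lambda_{\leg_+, \aug_+}$, together with $\mathbf{J}$-holomorphic curves $u_j \in \ms^{\mathbf{J}}_{L \leftarrow \tL}(\mixed{a}_j;\mathbf{b}_j, m, \tb_j)$ satisfying $\aug_-(\mathbf{b}_j) = 1 = \taug_-(\tb_j)$ for each summand $a_j$. I would then let $a_i$ be a summand of minimum height, and set $u = u_i$, $\mixed{a} = \mixed{a}_i$, $\mathbf{b} = \mathbf{b}_i$, $\tb = \tb_i$.

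The desired bound then follows from Proposition~\ref{prop:area-reeb} and the chord-height characterization of the capacity in Equation~(\ref{eq:cap-chord}). Since $a_i$ is the minimum-height chord in \emph{some} representative of $\lambda_{\leg_+, \aug_+}$ and $c(\leg_+, \aug_+)$ is the maximum of such minima, we have $h(a_i) \leq c(\leg_+, \aug_+)$. Combining this with Definition~\ref{defn:actions}, the two error bounds from the perturbation step, and Proposition~\ref{prop:area-reeb}, we obtain for any $[c,d] \subset (-\infty, s_+]$:
\[
A_c^d(u) < \mathfrak{a}(\mixed{a}) - \mathfrak{a}(m) < \mathfrak{a}(a_i) + \tfrac{\epsilon}{2} + \tfrac{\epsilon}{2} = e^{s_+} h(a_i) + \epsilon \leq e^{s_+} c(\leg_+, \aug_+) + \epsilon.
\]

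The only real subtlety is the ordering of choices: $\epsilon$ must be fixed before the perturbation data, so that the perturbation produced by the construction in Section~\ref{ssec:fundamental-class} has small enough $C^1$-norm to make the action errors below $\epsilon/2$. This is compatible with the construction of $\tL$, which works for arbitrarily small auxiliary functions $\epsilon h$, $f_\pm$, $F$; in particular, the conclusion of Corollary~\ref{cor:uniruling} remains available for such a $\tL$. The heavy lifting has already been done in Corollary~\ref{cor:uniruling}, Proposition~\ref{prop:area-reeb}, and Lemma~\ref{lem:mixed-intersection-action}, so I do not anticipate any serious technical obstacle beyond this bookkeeping.
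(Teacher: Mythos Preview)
Your proposal is correct and follows exactly the approach the paper indicates: the paper does not write out a proof, but states that the corollary follows by combining Equation~(\ref{eq:cap-chord}), Lemma~\ref{lem:mixed-intersection-action}, and Proposition~\ref{prop:area-reeb} to refine Corollary~\ref{cor:uniruling}. Your argument supplies precisely those details, including the correct bookkeeping that the perturbation must be chosen after $\epsilon$ so that the action discrepancies are each below $\epsilon/2$, and the use of Equation~(\ref{eq:cap-chord}) to bound $h(a_i)$ by the capacity.
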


We will call a curve $u$ described by Corollary~\ref{cor:fun-disk} a {\bf fundamental $J$-holomorphic disk}.

\section{Upper Bounds on the Relative Gromov Width}
\label{sec:upper}

The goal of this section is to prove Theorem~\ref{thm:ub}, which provides an upper bound on the relative Gromov width.  Following ideas from the classical argument of Gromov \cite{gromov:hol} that were adapted to the relative setting by Barraud and Cornea
 \cite{bc:lagr-intersection}, we use the $J$-holomorphic disks from 
 Corollary~\ref{cor:fun-disk} to obtain the desired upper bounds.  More specifically, given an embedding of a ball,
we will pull back the $J$-holomorphic curve passing through the center of the ball to a holomorphic (and hence minimal) surface in $\cc^n$.  We will then apply
monotonicity of area for minimal surfaces  to compare the area of the holomorphic curve to the capacity of the ball. 

\subsection{Monotonicity}
\label{ssec:mono}

We will need a modification of the classical monotonicity property for minimal surfaces with (carefully controlled) boundary, adapted from arguments of Ekholm, White, and Wienholz \cite{eww:minimal}. The key quantity in monotonicity is the density function  $\Theta(t)$, which is the
 ratio of the areas of $\Sigma$, the image of a holomorphic curve with boundary on a union of Lagrangian planes, 
 to those of a plane $\rr^2$ inside the ball of radius $t$:
\[\Theta(t) = \frac{\operatorname{Area}(\Sigma \cap B^{2n}(t))}{\pi t^2}.\]

\begin{thm} \label{thm:mono}  Let $P, \tP \subset (\rr^{2n}, \omega_0)$ denote transverse
Lagrangian planes that pass through the origin, and let
	$\Sigma \subset \rr^{2n}$ be the image of a proper $J_0$-holomorphic curve with $0 \in \partial \Sigma \subset P \cup \tP$.  Then
	   $\Theta(t)$ is non-decreasing.  In particular, if we let $Z = \lim_{t \to 0} \Theta(t)$, 
	then for any $t>0$, we have
	\[ \pi t^2 \leq \frac{\mathrm{Area}(\Sigma \cap B^{2n}(t))}{Z}.\]
 \end{thm}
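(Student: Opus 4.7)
The plan is to establish the differential inequality $A'(t) \geq \tfrac{2}{t}A(t)$ for $A(t) := \area(\Sigma \cap B^{2n}(t))$, from which $(A(t)/t^2)' \geq 0$, and hence the non-decreasing property of $\Theta(t) = A(t)/(\pi t^2)$, follows by integration; the stated inequality $\pi t^2 \leq \area(\Sigma \cap B(t))/Z$ is then immediate from $\Theta(t) \geq \lim_{s \to 0^+}\Theta(s) = Z$. The starting point is that $J_0$-holomorphic curves are calibrated by $\omega_0$ and therefore two-dimensional minimal submanifolds of $\rr^{2n}$; equivalently, $\Delta_\Sigma r^2 = 4$ on $\Sigma$ where $r(x) = |x|$. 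Applying the divergence theorem on $\Sigma \cap B(t)$ to the vector field $\nabla^\Sigma(r^2/2)$ produces
$$2A(t) = \int_{\partial(\Sigma \cap B(t))} \langle x, \nu \rangle \, d\mathcal{H}^1,$$
where $\nu$ is the outward unit conormal to $\partial(\Sigma \cap B(t))$ inside $\Sigma$, and the boundary splits into the spherical piece $\Sigma \cap \partial B(t)$ and the Lagrangian piece $\partial \Sigma \cap B(t) \subset P \cup \tP$.

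The crucial observation is that the Lagrangian piece contributes zero. At a smooth point $p \in \partial\Sigma \cap P$ (the argument for $\tP$ is identical), the outward conormal $\nu \in T_p\Sigma$ is the unit vector in $T_p\Sigma$ orthogonal to the tangent line $T_p(\partial\Sigma) \subset T_pP$. Since $\Sigma$ is $J_0$-holomorphic, $T_p\Sigma$ is $J_0$-invariant, so $\nu = \pm J_0\tau$ for a unit vector $\tau \in T_p(\partial\Sigma) \subset T_pP$. The Lagrangian condition forces $J_0(T_pP) = (T_pP)^\perp$ in $\rr^{2n}$, hence $\nu \perp T_pP$, and because $p \in P$ we obtain $\langle x,\nu\rangle(p) = \langle p,\nu\rangle = 0$. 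On the spherical piece, the outward conormal is $\nu = \nabla^\Sigma r / |\nabla^\Sigma r|$, so a short computation using $\nabla^\Sigma r = \mathrm{proj}_{T\Sigma}(x/|x|)$ yields $\langle x, \nu \rangle = t\,|\nabla^\Sigma r|$ since $|x| = t$ there. Therefore
$$2A(t) = t\int_{\Sigma \cap \partial B(t)} |\nabla^\Sigma r| \, d\mathcal{H}^1.$$

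To finish, I would combine this identity with the coarea formula $A'(t) = \int_{\Sigma \cap \partial B(t)} |\nabla^\Sigma r|^{-1} \, d\mathcal{H}^1$ and the pointwise bound $|\nabla^\Sigma r| \leq 1 \leq |\nabla^\Sigma r|^{-1}$, giving $2A(t) \leq t A'(t)$ and thereby the desired monotonicity. The main obstacle is regularity: the image $\Sigma$ may possess isolated branch points of the parametrization, the set $\partial\Sigma \cap \partial B(t)$ forms a (generically finite) corner locus, and the origin $0 \in P \cap \tP$ lies on $\partial\Sigma$ where both the smoothness of $\partial\Sigma$ and the disjointness of the Lagrangian planes can fail simultaneously. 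Justifying the divergence identity rigorously therefore requires either treating $\Sigma$ as an integer-rectifiable current and invoking the first-variation formula for stationary varifolds, or excising small neighborhoods of the branch locus, the corner points, and the origin and passing to a limit, using that each of these exceptional sets has vanishing $\mathcal{H}^1$-measure. The reflection-and-doubling strategy of Ekholm--White--Wienholz provides an alternative route that trades the corner analysis for a careful check that the doubled surface remains stationary.
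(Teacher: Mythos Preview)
Your argument is correct and is essentially the same monotonicity computation as the paper's, only unpacked: the paper quotes the Ekholm--White--Wienholz formula
\[
\frac{d}{dt}\Theta(t) = \frac{d}{dt}\int_{\Sigma' \cap B^{2n}(t)} \frac{|D^\perp |x||^2}{|x|^2}\,dA \;-\; \frac{1}{t^3}\int_{\partial\Sigma' \cap B^{2n}(t)} x \cdot n_{\Sigma'}\,ds
\]
and then shows the boundary integrand vanishes, whereas you rederive the same inequality from $\Delta_\Sigma r^2 = 4$, the divergence theorem, and coarea. The key step is identical in both: on the Lagrangian part of $\partial\Sigma$ the conormal $\nu$ is orthogonal to the plane (the paper cites Ye's lemma; you prove it directly via $\nu = \pm J_0\tau$), and since a Lagrangian \emph{plane} through the origin contains the position vector, $\langle x,\nu\rangle = 0$.

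The only substantive difference is in dispatching the regularity issue at the corner $0 \in P \cap \tP$. The paper resolves this cleanly by invoking a result of Ahn (\emph{Proposition 3.1 of \cite{ahn:boundary-id}}) asserting that $\Sigma' := \Sigma \setminus (P \cap \tP)$ is smooth, so that $\partial\Sigma' \cap B(t)$ has finite length and the EWW computation applies verbatim to $\Sigma'$. Your sketch of alternatives (varifold first variation, excision of a null set, or reflection) would also work, but Ahn's regularity statement is the shortest route if you want to keep the argument self-contained at the level of smooth surfaces.
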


\begin{proof}  By a result of Ahn \cite[Proposition 3.1]{ahn:boundary-id},  we know that $\Sigma':= \Sigma \setminus (P \cap \tP)$ is smooth.
  In particular, since $\Sigma$ is the image of a proper map, the length of $\partial \Sigma' \cap B^{2n}(t)$  is finite for all $t$.
Then, since $\Sigma'$ is a smooth, minimal $2$-manifold in $\rr^{2n}$ with boundary of finite length, the arguments in 
the proof of \cite[Theorem 9.1]{eww:minimal} show that 
	\[\frac{d}{dt}\Theta(t) = \frac{d}{dt}\int_{\Sigma' \cap B^{2n}(t)} \frac{\left|D^\perp |x|\right|^2}{|x|^2}\,dA - \frac{1}{t^3} \int_{\partial \Sigma' \cap B^{2n}(t)} x \cdot n_{\Sigma'}\, ds,
	\] 
	where $n_{\Sigma'}$ is the outward-pointing normal along $\partial \Sigma'$ and $D^\perp |x|$ denotes the projection of the derivative of $|x|$ to the orthogonal complement of $T_x\Sigma'$. Since $\Sigma'$ is the image of a $J_0$-holomorphic curve and $P, \tP$ are Lagrangian,  a lemma of Ye \cite[Lemma 2.1]{ye:gr-compactness} shows that for all $x \in \partial \Sigma'$,  $n_{\Sigma'}(x) \perp T_x(P \cup \tP)$.
Further, as $P, \tP$ are Lagrangian planes through the origin, we have that $x \in T_x(P \cup \tP)$.  Thus, our second integrand vanishes, and since the first integrand is non-negative, we obtain
$\frac{d}{dt}\Theta(t) \geq 0$.  The theorem follows.
	\end{proof}

\subsection{Upper Bound}\label{ssec:ub}

With the appropriate form of monotonicity in hand,  we are ready to prove Theorem~\ref{thm:ub}.

\begin{proof}[Proof of Theorem~\ref{thm:ub}]   To set notation, let $L$ be a \cobord from $\leg_{-}$ to $\leg_{+}$, $\aug_{-}$ an augmentation of $\leg_{-}$, and $\aug_{+}$  the augmentation of $\leg_{+}$ induced by $L$.  Suppose that we have a relative symplectic embedding  $\psi: B^{2n}(r) \hookrightarrow ((-\infty,0] \times \cont{P}, L_{-\infty}^0)$.  It suffices to show that, for arbitrary $\epsilon>0$, we have 
\begin{equation} \label{eqn:ub-rough}
\pi r^2 \leq 2 (c(\leg_+,\aug_+) + \epsilon).
\end{equation}

Fix $\epsilon>0$.  Observe that by carefully choosing the perturbation function $F$ near $\psi(0)$, we can construct $\tL$ so that the pullback of $\tL$ to $B^{2n}(r)$ is the Lagrangian plane $\tP$ given by the graph of the linear map $\delta\cdot\operatorname{Id}$ for some small $\delta > 0$.  

Consider a domain dependent $\mathbf{J}$  induced by a path in $\mathcal{J}^{adm}$ such that each $J$ in the path extends $\psi_*J_0$.
There is a sequence of generic domain dependent complex structures $\mathbf J_k$ that converge to $\mathbf J$.   
For each $k$, Corollary~\ref{cor:fun-disk}  yields a fundamental disk $u_{k} \in \ms_{L \leftarrow \tL}^{\mathbf{J}_k}(\mixed{a}_k; \mathbf{b}_{k}, \psi(0), \tb_{k})$.  Since for all $k$, 
  there are only a finite number of options for
  $\mixed{a}_k, \mathbf{b}_{k}$ and $\tb_{k}$, by passing to a subsequence, we can assume 
   there exists a sequence $u_{k} \in  \ms_{L \leftarrow \tL}^{\mathbf{J}_k}(\mixed{a}; \mathbf{b}, \psi(0), \tb)$, i.e. the sequence lies in moduli spaces with fixed asymptotics.   Corollary~\ref{cor:fun-disk} and the fact that $s_{+} = 0$ show that 
\begin{equation} \label{eqn:area-cap-n}
A_{-\infty}^0 (u_k) \leq c(\leg_{+}, \aug_{+}) + \epsilon.
\end{equation}
Gromov compactness then yields a subsequence of the $u_k$ that converges to a $\mathbf J$-holomorphic map $u\in \ms^{\mathbf{J}}_{L \leftarrow \tL}(\mixed{a}; \mathbf{b},  \psi(0), \tb)$;  exactness and the usual SFT compactness arguments imply that no bubbling can occur inside $\psi(B^{2n}(r))$.  Thus,  Equation~(\ref{eqn:area-cap-n}) yields the bound
\begin{equation} \label{eqn:area-cap}
A_{-\infty}^0 (u) \leq c(\leg_{+}, \aug_{+}) + \epsilon.
\end{equation}

Let $\widetilde D = u^{-1}(\im \psi)$ and $\widetilde u  = \psi^{-1} \circ u|_{\widetilde D}$.  Further, let
$\Sigma$ denote the image of $\widetilde u$.  
It is straightforward to see that
$\partial \Sigma \cap \operatorname{Int} B(r) \subset \rr^n \cup \tP$, and that  $\widetilde u (z_1) = 0 \in \rr^n \cap \tP$.
Applying Theorem~\ref{thm:mono}  and the bound on the area provided by Equation~(\ref{eqn:area-cap}), we find that for all $t < r$,
\begin{equation} \label{eqn:c-z bound2}
 \pi t^2 \leq \frac{\mathrm{Area}(\Sigma \cap B^{2n}(t))}{Z} \leq \frac{c(\leg_{+}, \aug_{+})+ \epsilon}{Z}.
 \end{equation}
It remains to find $Z$, which, since $0$ is not a smooth point of $\partial \Sigma$, will require the asymptotic analysis in Robbin and Salamon \cite{robbin-salamon:hol}.

We begin the computation of $Z$ by  setting notation.   
 By a conformal change of coordinates in a neighborhood $U \subset \widetilde D$ of  $z_1$, we can describe points in
 $S := U \setminus \{z_1\}$ by $ \sigma + i \tau$, with   $\sigma \in [0,\infty)$, $\tau \in [0,1]$.  Let $\partial_0 S = [0,\infty)$ and $\partial_1 S = [0,\infty) + i$.  The construction of $\widetilde u$ implies that $\widetilde u(\partial_0 S) \subset \rr^n$, while $\widetilde u(\partial_1 S) \subset \tP$. 
We next apply Theorem B (or, more accurately, a coordinate-by-coordinate application of Theorem C) of \cite{robbin-salamon:hol}
to get an asymptotic expression for $\widetilde u$. 
In particular, since the counter-clockwise angle from $\rr^n$ to $\tP$ in each coordinate is $\delta \in (0, \pi)$, we obtain a unique nonzero complex vector $v \in \tP$, a positive real number $\beta = k \pi -  \delta$  for some positive integer $k$, and a $\gamma>0$ such that 
\begin{equation*}
	\widetilde u(\sigma + i \tau) = v e^{-\beta(\sigma+ i \tau)} + O(e^{-(\beta + \gamma)\sigma}).
\end{equation*} 
Hence, as we let $\sigma \to \infty$ --- which is the same as letting $t \to 0$ --- we see that $\Sigma$ asymptotically covers a fraction $\frac{k\pi - \delta}{2\pi}$ of the area of tangent disk to $\Sigma$ at the origin.  That is, we obtain 
\begin{equation*}
	Z = \lim_{t \to 0} \Theta(t) = \frac{k\pi - \delta}{2\pi}.
\end{equation*}
Equation~(\ref{eqn:c-z bound2}) then implies that for all $t < r$,
\[ \pi t^2 \leq \frac{2 \pi (c(\leg_{+}, \aug_{+})+ \epsilon)}{k\pi - \delta} \leq \frac{2 \pi (c(\leg_{+}, \aug_{+})+ \epsilon)}{\pi - \delta}.\]
Since $\delta>0$ was arbitrary, we obtain the desired inequality in Equation~(\ref{eqn:ub-rough}).  Theorem~\ref{thm:ub} follows.
\end{proof}

\bibliographystyle{amsplain} 
\bibliography{main}

\end{document}